\pgfplotsset{compat=1.16} 
\newtheorem{theorem}{Theorem}
\newtheorem{lemma}{Lemma}
\newtheorem{claim}{Claim}
\theoremstyle{definition}
\newtheorem{definition}{Definition}
\newtheorem{problem}{Problem}
\newcommand{\zd}{z^\dagger}
\newcommand{\psg}{$(p+1)K_2$-saturated graph}
\newcommand{\psgs}{$(p+1)K_2$-saturated graphs}
\newcommand{\defi}{\mathrm{def}}
\newcommand{\zf}{\left\lfloor \zd \right\rfloor}
\newcommand{\zc}{\left\lceil \zd \right\rceil}
\newcommand{\mc}{\mathcal{C}}
\newcommand{\zt}{\tilde z}
\newcommand{\sat}{\mathrm{sat}}
\newcommand{\Sat}{\mathrm{Sat}}
\newcommand{\SQRT}{\sqrt{8n+8k+1}}
\newcommand{\dL}{\delta_L}
\newcommand{\dR}{\delta_R}
\title{The Minimum Number of Edges in $(p+1)K_2$-Saturated Graphs}
\author{%
  Xiaoteng Zhou\thanks{Corresponding author. Email: \texttt{zxt@amp.i.kyoto-u.ac.jp}.}\\
  Graduate School of Informatics, Kyoto University, Japan\\
  \texttt{zxt@amp.i.kyoto-u.ac.jp}
  \and
  Kazuya Haraguchi\\
  Graduate School of Informatics, Kyoto University, Japan\\
  \texttt{haraguchi@amp.i.kyoto-u.ac.jp}
  \and
  Hanchun Yuan\\
  Zhejiang Normal University, China\\
  \texttt{hanchunyuan@zjnu.edu.cn}
}
\begin{document}

\maketitle

\begin{abstract}
Given a family of graphs $\mathcal{F}$, 
a graph $G$ is $\mathcal{F}$-saturated if it is $\mathcal{F}$-free but the addition of any missing edge creates a copy of some $F \in \mathcal{F}$. 
The study of the minimum number of edges in  $\mathcal{F}$-saturated graphs is a central topic in extremal graph theory.

Let $(p+1)K_2$ denote a matching of size $p+1$. 
Determining the minimum number of edges in a $(p+1)K_{2}$-saturated graph is a fundamental question in this area,
explicitly posed as Problem~9 in the survey by Faudree et al. (2011). 
In this paper, we refine the structural analysis of \psgs{} and derive an explicit formula for the number of edges in terms of a single integer parameter. 
By minimizing this formula we determine $\sat(n,(p+1)K_2)$ for all $n>2p$, 
thereby resolving Problem~9 in full generality and extending earlier results of K\'aszonyi--Tuza (1986) and Zhang--Lu--Yu (2023).  
Moreover, by maximizing the same formula we recover the classical Erd\H{o}s--Gallai (1959) upper bound on the number of edges in such graphs.
\end{abstract}

\section{Introduction}
\label{intro}
Let $\mathcal{F}$ be a family of graphs. 
A graph $G = (V,E)$ is called $\mathcal{F}$-saturated 
if no $F \in \mathcal{F}$ is a subgraph of $G$,
but for any edge $e \in E(\overline{G})$, $G+e$ contains some $F \in \mathcal{F}$ as a subgraph, 
where $\overline{G}$ is the complement of $G$.

For a positive integer $n$, the \emph{saturation number} is defined as 
\[
\sat(n,\mathcal{F}) \triangleq \min\{|E(G)| : |V(G)|=n,\, G \text{ is  $\mathcal{F}$-saturated}\}.
\]
Similarly,  $\Sat(n,\mathcal{F})$ denotes the family of 
$n$-vertex $\mathcal{F}$-saturated graphs with exactly $\sat(n,\mathcal{F})$ edges.

In 1941, Turán~\cite{Turan1941} introduced the idea of an extremal number 
and determined the maximum number of edges in $K_r$-free graphs. 
Later, Erd\H{o}s,
Hajnal and Moon~\cite{AProbleminGraphTheory} 
focused on the corresponding minimization problem.

The systematic study of $\sat(n,\mathcal{F})$ was surveyed by 
Faudree et al.~\cite{FFS2011}, who compiled a wide range of results, conjectures, 
and open problems.
Let $(p+1)K_2$ denote a matching of size $p+1$. 
Determining $\sat(n,(p+1)K_2)$, the minimum number of edges 
in a \psg{} of order $n$, 
is an important  open problem
(see Problem~9 of~\cite{FFS2011}).
In this context, it is natural to assume $n>2p$
since a graph with no more than $2p$ vertices
cannot contain a matching of size $p+1$
even when missing edges are added somehow. 

Several partial results are known. 
Mader~\cite{Mader1973} proved a structural theorem for $(p+1)K_{2}$-saturated graphs.
He showed that such a graph is either a disjoint union of odd-order cliques, 
or it is connected and contains at least one vertex of degree $|V(G)|-1$.
This is summarized in the following theorem.
\begin{theorem}[Mader~\cite{Mader1973}]\label{thm:Mader}
Let $G$ be a $(p+1)K_{2}$-saturated graph on $n$ vertices. 
If $G$ is disconnected, then $G$ is a disjoint union of cliques, each of which has an odd number of vertices. 
If $G$ is connected, then $G$ contains a vertex of degree $n-1$, 
and the deletion of this vertex 
yields a $pK_{2}$-saturated graph.
\end{theorem}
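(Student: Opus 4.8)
The plan is to replace the saturation condition by a clean matching-theoretic criterion and then let the Gallai--Edmonds structure theorem do the structural work. First I would prove the reformulation: for $n>2p$, a graph $G$ is $(p+1)K_2$-saturated if and only if $\nu(G)\le p$ and $\nu(G-u-v)\ge p$ for every non-edge $uv$ of $G$. Both directions are short: adding a non-edge $uv$ to a $(p+1)K_2$-free graph produces a $(p+1)$-matching which must contain $uv$, so deleting $uv$ leaves a $p$-matching avoiding $u$ and $v$; conversely such a $p$-matching extends by $uv$. A first consequence is that $\nu(G)=p$ in all cases, since $n>2p$ forces $G\neq K_n$ unless $n=2p+1$, and $\nu(K_{2p+1})=p$.

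For the disconnected case, write $G=C_1\cup\cdots\cup C_t$ with $t\ge 2$. Given $x\in C_i$, pick $y$ in another component; then $xy$ is a non-edge, so $\nu(G-x-y)=p=\nu(G)$, and splitting this matching over the components and comparing with $\nu(G)=\sum_\ell\nu(C_\ell)$ forces $\nu(C_i-x)=\nu(C_i)$ for \emph{every} $x\in C_i$. A standard consequence of the Gallai--Edmonds theorem is that a connected graph all of whose vertex-deleted subgraphs have the same matching number is factor-critical; hence every $C_i$ is factor-critical, and in particular has odd order. Finally, if some $C_i$ contained a non-edge $uv$, the reformulation applied to $G$ would give $\nu(C_i-u-v)=\nu(C_i)$, which is impossible, since deleting two vertices from a factor-critical graph of order $2s+1$ lowers the matching number from $s$ to at most $s-1$. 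So each $C_i$ is an odd clique, as claimed.

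For the connected case I may assume $G\neq K_n$ (if $G=K_{2p+1}$ the existence of a universal vertex is immediate). Then $\nu(G)=p$ and $\mathrm{def}(G)=n-2p\ge 1$, so in the Gallai--Edmonds decomposition $V(G)=D\cup A\cup C$ the set $D$ is nonempty, with factor-critical components $D_1,\dots,D_k$ and $k-|A|=\mathrm{def}(G)$. The crux is the claim that every $w\in A\cup C$ is universal: every maximum matching of $G$ covers such a $w$, so $\nu(G-w)\le p-1$, and if $wz$ were a non-edge we would get $\nu(G-w-z)\le p-1<p$, contradicting the reformulation. It remains to exclude $A\cup C=\varnothing$: in that case $N(D)\subseteq D$, so each $D_j$ is a component of $G$, whence connectivity gives $G=D_1$, a factor-critical graph; but then any non-edge of $D_1$ contradicts the reformulation exactly as above, forcing $D_1$ to be complete, i.e.\ $G=K_{2p+1}=K_n$, against our assumption. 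Thus some universal vertex $v$ exists. To obtain the second conclusion I would verify the reformulation (with $p$ replaced by $p-1$) for $G'=G-v$: a $p$-matching of $G'$ together with an edge from $v$ to a vertex it misses would be a $(p+1)$-matching of $G$ (such a missed vertex exists because $n\ge 2p+2$), so $\nu(G')\le p-1$; and for any non-edge $u'v'$ of $G'$, which is also a non-edge of $G$, deleting from a maximum matching of $G-u'-v'$ the at most one edge incident to $v$ leaves a matching of $G'-u'-v'$ with at least $p-1$ edges. Hence $G'$ is $pK_2$-saturated.

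The hard part is the connected case, and inside it the production of a universal vertex; the clean route above is to recognize that belonging to $A\cup C$ of the Gallai--Edmonds decomposition means ``covered by every maximum matching,'' which together with the reformulation instantly forbids a non-neighbour. The single genuinely exceptional object is $G=K_{2p+1}$, occurring when $n=2p+1$: it is connected and $(p+1)K_2$-saturated with every vertex universal, yet $K_{2p+1}-v=K_{2p}$ already contains a $pK_2$, so the last assertion is to be read for $G\neq K_{2p+1}$ (equivalently, $n\ge 2p+2$), in agreement with Mader's original statement. I would also double-check the two Gallai--Edmonds inputs used — factor-criticality of the components of $G[D]$, and that every maximum matching covers $A\cup C$ — but these are classical.
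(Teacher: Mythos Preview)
The paper does not prove Theorem~\ref{thm:Mader}; it is quoted from Mader~\cite{Mader1973} and used only as a black box (in Lemma~\ref{lem:universal_vertex_matching} and via the K\'aszonyi--Tuza refinement recalled in Section~2). There is therefore nothing in the paper to compare your argument against.

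On its own merits your proof is correct. The reformulation ``$\nu(G)\le p$ and $\nu(G-u-v)\ge p$ for every non-edge $uv$'' is exactly the right handle, and Gallai--Edmonds finishes both parts cleanly: in the disconnected case every vertex of a component $C_i$ is missed by some maximum matching of $C_i$, so $C_i$ is factor-critical, and then a non-edge inside $C_i$ is ruled out by the vertex count $|C_i-u-v|=2\nu(C_i)-1$; in the connected case every $w\in A\cup C$ is covered by all maximum matchings, hence $\nu(G-w)\le p-1$ and $w$ can have no non-neighbour, while $A\cup C=\varnothing$ makes $G$ a single factor-critical graph and forces $G=K_{2p+1}$. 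Your isolation of $K_{2p+1}$ (the case $n=2p+1$) as the lone exception to the ``deletion yields a $pK_2$-saturated graph'' clause is also right, since $K_{2p}$ already contains $pK_2$; the paper's statement of Theorem~\ref{thm:Mader} and its use in the proof of Lemma~\ref{lem:universal_vertex_matching} tacitly require $n\ge 2p+2$ at that step.
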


The saturation number $\sat(n,(p+1)K_{2})$ for $(p+1)K_2$
and graphs $\Sat(n,(p+1)K_{2})$
achieving the number  
are already determined when $n$ is sufficiently larger
than $p$. To be more precise,
K\'aszonyi and Tuza~\cite{KT1986} proved the following theorem. 
\begin{theorem}[K\'aszonyi and Tuza~\cite{KT1986}]
  \label{thm:KT}
  For positive integers $n$ and $p$,
  if $n\ge 3p$, then 
\[
\sat(n,(p+1)K_{2}) = 3p 
\quad \text{and} \quad 
\Sat(n,(p+1)K_{2}) = \{\, pK_{3} \cup (n-3p)K_{1} \,\}.
\]
\end{theorem}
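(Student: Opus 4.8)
The plan is to establish both conclusions of Theorem~\ref{thm:KT} — the value $\sat(n,(p+1)K_2)=3p$ and the uniqueness of the extremal graph — by induction on $p$, using Mader's structure theorem (Theorem~\ref{thm:Mader}) as the engine. For the upper bound I would simply exhibit $G_0 = pK_3\cup(n-3p)K_1$: it has $3p$ edges, its matching number is $p$ so it is $(p+1)K_2$-free, and adding any missing edge — whether it joins two triangles, a triangle vertex to an isolated vertex, or two isolated vertices — produces a matching of size $p+1$. Hence $G_0$ is $(p+1)K_2$-saturated and $\sat(n,(p+1)K_2)\le 3p$.

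For the lower bound together with uniqueness I induct on $p$; the base case $p=0$ is the triviality that a $K_2$-saturated graph is edgeless. For the inductive step, let $G$ be $(p+1)K_2$-saturated on $n\ge 3p$ vertices, and split according to Mader's dichotomy. If $G$ is disconnected, then $G=\bigcup_{i=1}^{m}K_{2a_i+1}$ with $m\ge 2$ and $a_i\ge 0$; being $(p+1)K_2$-free forces $\sum_i a_i=\nu(G)\le p$ (where $\nu$ is the matching number), while saturation — adding an edge between two components raises the matching number by exactly one and must create $(p+1)K_2$ — forces $\sum_i a_i=p$, and counting vertices gives $m=n-2p$, which is $\ge p$ precisely because $n\ge 3p$. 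Then $|E(G)|=\sum_i\binom{2a_i+1}{2}=2\sum_i a_i^{2}+p$, and a smoothing argument (if some part is $\ge 2$ then, since $m\ge p$, some part is $0$, and replacing the pair $(a_j,0)$ by $(a_j-1,1)$ strictly decreases $\sum_i a_i^{2}$) shows $\sum_i a_i^{2}\ge p$ with equality only when exactly $p$ parts equal $1$ and the remaining $n-3p$ parts are $0$; hence $|E(G)|\ge 3p$ with equality only for $G_0$.

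If instead $G$ is connected, then for $p\ge 2$ we have $n\ge 3p>2p+1$, so $G\neq K_n$ and Mader yields a universal vertex $v$ such that $G-v$ is $pK_2$-saturated on $n-1\ge 3(p-1)$ vertices; the induction hypothesis gives $|E(G-v)|\ge 3(p-1)$, hence
\[
|E(G)| = (n-1) + |E(G-v)| \ge (n-1) + 3(p-1) = n + 3p - 4 \ge 6p - 4 > 3p,
\]
so no connected graph is extremal when $p\ge 2$. The remaining case $p=1$ is handled directly from the classical description of graphs without two independent edges (a star or a triangle, plus isolated vertices). Combining the two cases gives $\sat(n,(p+1)K_2)=3p$, attained only by $G_0=pK_3\cup(n-3p)K_1$.

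The main obstacle is the connected case: one must apply Mader's theorem correctly — in particular noting that the complete-graph possibility $K_{2p+1}$ cannot arise once $n\ge 3p$ and $p\ge 2$ — and then check that the inductive estimate strictly beats $3p$, which it does comfortably for $p\ge 2$ but only barely fails numerically at $p=1$, forcing a short standalone treatment of that (small) case. By contrast, the disconnected case is a routine convexity/smoothing optimization once the equality $\sum_i a_i=p$ has been extracted from the saturation hypothesis.
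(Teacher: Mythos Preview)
Your argument is correct for the value $\sat(n,(p+1)K_2)=3p$ and takes a genuinely different route from the paper. The paper does not prove Theorem~\ref{thm:KT} independently; it quotes it and then, in Section~\ref{sec:conseq}, recovers the value as a corollary of Theorem~\ref{thm:min-edge}: for $n\ge 3p$ one checks that $n\le 2k^2-2k$ (analytically for $n\ge 8$, via Table~\ref{tab:enum} for smaller $n$), so $z^\ast=0$ and $D(0)/2=3p$. Your proof is the classical direct one --- Mader's dichotomy, a smoothing step in the disconnected case, and induction through the universal vertex in the connected case. The paper's derivation situates Theorem~\ref{thm:KT} inside a uniform formula but rests on all of Section~\ref{sec:estimate}; yours is short and self-contained, and it also addresses the uniqueness claim $\Sat=\{pK_3\cup(n-3p)K_1\}$, which the paper's Section~\ref{sec:conseq} does not attempt to re-derive.

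One caveat on your $p=1$ hand-off: at $(n,p)=(4,1)$ the claw $K_{1,3}$ is $2K_2$-saturated with exactly three edges, so $\Sat(4,2K_2)$ contains both $K_3\cup K_1$ and $K_{1,3}$, and the uniqueness clause, as stated, fails there. Your connected-case estimate $(n-1)+3(p-1)>3p$ requires $n>4$, and the four-vertex star is precisely the case that slips through. This is a wrinkle in the theorem's statement rather than in your method; for $p\ge 2$, and for $p=1$ with $n\neq 4$, your argument goes through and uniqueness holds.
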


For the case of $2p<n<3p$,
no complete analysis exists in the literature
although there are some related results. 
Let $k = n - 2p$.
Zhang, Lu, and Yu~\cite{ZhangLuYu2023} determined $\sat(n,(p+1)K_2)$ and $\Sat(n,(p+1)K_2)$ for $n > \sqrt{n}+2p$ (i.e., $k > \sqrt{n}$ and $p \ge 2$). 
They showed that in this range every extremal $(p+1)K_2$-saturated graph is disconnected, and all its components are complete graphs of odd order whose sizes differ by at most $2$.
Their result is summarized as follows. 
\begin{theorem}[Zhang, Lu, and Yu~\cite{ZhangLuYu2023}]\label{thm:ZhangLuYu}
  For positive integers $n$ and $p$ such that
  $n\ge 2p\ge 4$,
  let $k=n-2p$.
  If $\sqrt{n} + 2p < n$, or equivallently,
  if $k>\sqrt{n}$,
  then there exist integers $A$ and $B$ such that
  $A+B = k$, $A(2k+1) + B(2k-1) = n$,
\[
\sat(n, (p+1)K_2) = A \binom{2k+1}{2} + B \binom{2k-1}{2}
\]
and
\[
\Sat(n, (p+1)K_2) = \{ AK_{2k+1} \cup BK_{2k-1} \}.
\]
\end{theorem}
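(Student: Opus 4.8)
The plan is to exploit the dichotomy of Theorem~\ref{thm:Mader}. I will show (i) that among \emph{disconnected} \psgs{} the number of edges is minimized, uniquely, by a balanced disjoint union of $k$ odd cliques --- which is exactly the family in the statement --- and that its edge count equals the displayed value; and (ii) that every \emph{connected} \psg{} on $n$ vertices has strictly more edges once $k>\sqrt n$. Combining (i) and (ii) yields both the value of $\sat(n,(p+1)K_2)$ and the description of $\Sat(n,(p+1)K_2)$.

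\medskip
\noindent\emph{Step 1: disconnected graphs.} Let $G$ be a disconnected \psg. By Theorem~\ref{thm:Mader}, $G=K_{2a_1+1}\cup\cdots\cup K_{2a_t+1}$ with integers $a_i\ge 0$. Its matching number $\sum_i a_i$ is at most $p$ because $G$ is $(p+1)K_2$-free; and inserting an edge between components $i$ and $j$ merges them into the even clique $K_{2a_i+2a_j+2}$, raising the matching number to $\sum_i a_i+1$, so saturation forces $\sum_i a_i\ge p$. Hence $\sum_i a_i=p$, counting vertices gives $t=n-2p=k$, and conversely every such union is a \psg. Since $\binom{2a+1}{2}=2a^{2}+a$, we have $|E(G)|=2\sum_i a_i^{2}+p$, so over disconnected $G$ the edge count is minimized exactly when $\sum_i a_i^{2}$ is minimized subject to $\sum_i a_i=p$ with $k$ nonnegative integer summands; by strict convexity of $x\mapsto x^{2}$ this occurs precisely when the $a_i$ pairwise differ by at most $1$, i.e.\ when the $k$ clique orders take at most two values, differing by $2$. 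This balanced union is the family in the statement, and its edge count is the displayed formula.

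\medskip
\noindent\emph{Step 2: connected graphs.} Let $G$ be connected. By Theorem~\ref{thm:Mader}, $G$ has a vertex of degree $n-1$ whose deletion leaves a $pK_2$-saturated graph on $n-1$ vertices; iterating, after peeling off $j\ge 1$ successive dominating vertices we reach a $(p+1-j)K_2$-saturated graph $H$ on $n-j$ vertices that is disconnected --- or, when $j=p$, edgeless --- since in the range $k\ge 3$ none of the intermediate graphs can be a single odd clique. By Step~1,
\[
|E(G)|\ \ge\ \sum_{i=1}^{j}(n-i)\ +\ m(p-j,\,k+j)\qquad\text{for some }1\le j\le p,
\]
where $m(s,t)$ denotes the Step~1 minimum for a disjoint union of $t$ odd cliques of total matching number $s$ (and $k+j=(n-j)-2(p-j)$, $m(0,k+p)=0$). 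It then remains to verify the explicit inequality $\sum_{i=1}^{j}(n-i)+m(p-j,k+j)>m(p,k)$ for all $1\le j\le p$: the left sum equals $jn-\binom{j+1}{2}$ and grows linearly in $j$, whereas $m(p-j,k+j)-m(p,k)$ is at least $-\,O\bigl(j(p/k+p^{2}/k^{2})+k\bigr)$, and $k>\sqrt n$ (equivalently $k^{2}>n>2p$) makes the gain $jn$ dominate; the case $j=1$ is the tightest, and a bounded number of small values of $p$ are checked directly (using $\sat(m,2K_2)=3$).

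\medskip
\noindent\emph{Main obstacle.} The delicate point is the strict inequality of Step~2 --- ruling out that peeling a few dominating vertices off $G$ and leaving a cheap union of odd cliques beats the fully disconnected construction. Near the threshold $k\approx\sqrt n$, and for small $p$, the crude estimates are too lossy (the $O(k)$ slack in comparing the two balanced sums of squares is the culprit), so one must carry the exact value of $m(\cdot,\cdot)$, treat $j=1$ with care, and dispose of the remaining small cases of $p$ by hand. This is precisely where the hypothesis $k>\sqrt n$ enters, and it is essentially sharp.
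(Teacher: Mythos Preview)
The paper does not prove this theorem: it is quoted from \cite{ZhangLuYu2023} and used as a black box for Case~(IV) in the proof of Theorem~\ref{thm:min-edge}. There is therefore no in-paper argument to compare your attempt against.

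On your proposal itself, Step~1 is correct and complete. (As an aside, the displayed statement appears to carry a typo: with $k>\sqrt n$ the constraints $A+B=k$ and $A(2k{+}1)+B(2k{-}1)=n$ force $A=(k+n-2k^{2})/2<0$; the intended clique orders are $2t\pm1$ for a suitable $t$, exactly as in the paper's later paraphrase preceding the proof of Theorem~\ref{thm:min-edge}, and your balanced-cliques conclusion matches that intended statement.)

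Step~2 has the right architecture but is not a proof as written. The big-$O$ bound on $m(p-j,k+j)-m(p,k)$ is asserted rather than derived, and ``a bounded number of small values of $p$ are checked directly'' is unsubstantiated --- your stated estimates do not show the exceptional range is finite, let alone independent of $k$. In fact no case-check is needed. In the paper's notation your target inequality is precisely $D(j)>D(0)$ for every integer $j\ge1$; since $k>\sqrt n$ (hence $k\ge3$ and $n<k^{2}\le 2k^{2}-2k$) forces $z^{\dagger}<0$, Claim~\ref{claim:g-monotone} gives $g(z)\ge g(1)$ on $[1,n)$, so
\[
D(z)-D(0)\ \ge\ g(1)-g(0)-R(0)\ \ge\ 2n-\frac{(n+k)^{2}}{k(k+1)}-k.
\]
Substituting $n=2p+k$ and clearing denominators, positivity of the right-hand side is equivalent to $4p(k^{2}-k-p)+k^{2}(k-3)>0$, which holds for all $k\ge3$ because $2p<k(k-1)$. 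Your strategy therefore succeeds once the estimates are carried out with care; what is missing is exactly that care.
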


\noindent
Furthermore, 
Yuan~\cite{Yuan25} showed that when $n < \sqrt{n/2} + 2p $ (i.e., $k < \sqrt{n/2}$), 
every $(p+1)K_{2}$-saturated graph must contain at least one vertex of degree $|V(G)|-1$.

As described above, however, 
the general problem of determining the minimum number of edges in 
$(p+1)K_2$-saturated graphs for all values of $n$ and $p$ had remained open.
Our approach applies to all $n>2p$,
so it is natural to formulate the problem in full generality as follows.

\begin{problem}\label{pro:open}
Determine $\sat(n,(p+1)K_2)$ and $\Sat(n,(p+1)K_2)$ for all integers $n>2p$ (i.e. $k \ge 1$).
\end{problem}
In this paper,
we provide a complete solution to Problem~\ref{pro:open}.
We identify a sharp boundary between
two behaviors (i.e., connected or disconnected)
of extremal $(p+1)K_2$-saturated graphs. 
For each case,
we give an explicit formula
for the minimum number of edges as a function of $n$ and $k$. 

This paper is organized as follows.
Section~2 introduces notation and 
provides preliminary results that are used
in later proofs.
Section~\ref{sec:estimate} gives Theorem~\ref{thm:min-edge},
the main theorem of this paper,
where we derive a unified expression for $\sat(n,(p+1)K_2)$.
Section~\ref{sec:conseq} provides consequences
that are obtained from the main theorem,
including how Theorem~\ref{thm:min-edge} solves
Problem~\ref{pro:open}
and how it generalizes Theorems~\ref{thm:KT} and \ref{thm:ZhangLuYu}. 
Furthermore, our framework recovers the classical upper bound on the number of edges of such graphs
that is derived by Erd\H{o}s and Gallai~\cite{ErdosGallai1959}.
Section~\ref{sec:conclusion} gives concluding remarks.

\section{Preliminaries}
We work with
standard terminologies of graph theory~\cite{Diestel2025}.
All graphs considered in this paper are simple and undirected.
For $v\in V(G)$, we denote by $d(v)$ the degree of $v$. 
For any vertex set $U \subseteq V(G)$, 
we write $G-U$ for the subgraph of $G$ induced by $V(G)\backslash U$.
$\mc(G)$ denotes the set of connected components of $G$.
We denote by \(\nu(G)\) the matching number
(i.e., the size of a maximum matching) in \(G\).

\begin{definition}[universal vertex] 
A vertex $v$ in a graph $G$ is called a \emph{universal vertex} if it is adjacent to every other vertex in $G$, 
i.e., its degree is $|V(G)| - 1$. 
\end{definition}

\begin{definition}[deficiency]
The \emph{deficiency} of a graph $G$ is defined as
$\defi(G) = |V(G)| - 2\nu(G)$.
\end{definition}

Let $G$ be a $(p+1)K_2$-saturated graph with $n$ vertices. 
We have $\nu(G)=p$ by defintion, and
for convenience, we denote its deficiency by $k$,
that is, $k=n-2p$, unless no confusion arises. 
%
%
%
K\'aszonyi and Tuza~\cite{KT1986} refined Mader's decomposition (i.e., Theorem~\ref{thm:Mader})
by explicitly analyzing the graph
obtained after deleting all universal vertices from $G$, 
thereby giving a clearer structural description.
\begin{itemize}
\item 
  If $G$ is not connected,
  then $G$ is a disjoint union of complete graphs,
  each of odd order. 
\item If $G$ is connected, then
  $G$ contains a nonempty set
  $Z$ of universal vertices,
  and $G - Z$ is a disjoint union of complete graphs,
  each of odd order.
  Furthermore,
  each complete subgraph $C_i$ in $G-Z$ contributes
  to the maximum matching so that
  $|V(C_i)| = 2\nu(C_i) + 1$,
  where we may write $p_i = \nu(C_i)$ unless
  no confusion arises. 
\end{itemize}

We provide preliminary results
that are used in later proofs.
Let $G$ be a $(p+1)K_2$-saturated graph.
As described above,  
there is a decomposition $V(G)=Z\cup V(G-Z)$
such that $Z$ is the set of universal vertices and
$G-Z$ is a disjoint union of odd cliques. 
This yields the relation among $|\mathcal{C}(G-Z)|$,
$k$ and $|Z|$.

\begin{lemma}\label{lem:universal_vertex_matching}
Let $G$ be a $(p+1)K_2$-saturated graph with $k=n-2p\ge 1$, 
and $Z$ be the set of its universal vertices. 
Then $\nu(G-Z) = \nu(G) - |Z|$ holds. 
\end{lemma}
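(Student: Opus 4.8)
The plan is to establish the two inequalities $\nu(G-Z)\ge\nu(G)-|Z|$ and $\nu(G-Z)\le\nu(G)-|Z|$ separately, exploiting the structure of $G$: recall that $G-Z$ is a disjoint union of odd cliques $C_1,\dots,C_c$ with $c=|\mathcal{C}(G-Z)|$. The inequality $\nu(G-Z)\ge\nu(G)-|Z|$ is the easy one and holds for every graph, since deleting the $|Z|$ universal vertices can destroy at most one edge of a fixed maximum matching of $G$ per deleted vertex; restricting that matching to $G-Z$ makes this precise.

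For the reverse inequality I would construct a matching of $G$ of size $\nu(G-Z)+|Z|$. Take a maximum matching $M$ of $G-Z$; since each $C_i$ is an odd clique, $M$ restricts to a near-perfect matching of $C_i$ and misses exactly one vertex $v_i$ there. The set $I=\{v_1,\dots,v_c\}$ is independent, has size $c$, and is completely joined to $Z$ because every vertex of $Z$ is universal. Hence, as long as $|Z|\le c$, we can match the vertices of $Z$ injectively into $I$, and together with $M$ this yields a matching of $G$ of size $\nu(G-Z)+|Z|$, so $\nu(G)\ge\nu(G-Z)+|Z|$. (In particular, the lemma is then equivalent to the counting identity $|\mathcal{C}(G-Z)|=k+|Z|$.)

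Thus everything reduces to the inequality $|Z|\le|\mathcal{C}(G-Z)|$, which I expect to be the main obstacle. Here I would argue by contradiction, after clearing away the trivial cases: if $G$ is disconnected then $Z=\varnothing$; if $G$ is the single clique $K_{2p+1}$ the statement is degenerate and handled directly; and otherwise $G$ is connected with $c\ge2$, because a single clique component $C_1$ would force every vertex of $C_1$ to be adjacent to all of $V(G)\setminus\{\text{itself}\}$ and hence universal, contradicting $C_1\cap Z=\varnothing$. Now suppose $|Z|\ge c+1$. Build a matching of $G$ greedily: a near-perfect matching inside each $C_i$, then the $c$ exposed clique-vertices matched to $c$ vertices of $Z$, then a near-perfect matching of the remaining $|Z|-c\ge1$ vertices of the clique $Z$. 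This leaves at most one vertex of $G$ uncovered, so $\defi(G)\le1$; since $\defi(G)=k\ge1$, this forces $k=1$, hence $n$ is odd and $\nu(G)=(n-1)/2=p$. But then $\nu(G+e)\le(n-1)/2=p<p+1$ for every non-edge $e$, so $G+e$ cannot contain $(p+1)K_2$; since $c\ge2$ such a non-edge exists (between $C_1$ and $C_2$), contradicting the $(p+1)K_2$-saturation of $G$. This establishes $|Z|\le c$, and combined with the two inequalities above it gives $\nu(G-Z)=\nu(G)-|Z|$.
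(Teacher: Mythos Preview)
Your argument is correct and takes a genuinely different route from the paper's. The paper iterates Mader's Theorem~\ref{thm:Mader}: deleting any universal vertex $v$ from a connected $(p+1)K_2$-saturated graph yields a $pK_2$-saturated graph, so $\nu$ drops by exactly one; since the remaining vertices of $Z$ stay universal in $G-\{v\}$, one repeats until $Z$ is exhausted. You instead start from the K\'aszonyi--Tuza structural description that $G-Z$ is a disjoint union of odd cliques, prove the two inequalities separately, and build the needed matching explicitly after establishing the auxiliary bound $|Z|\le|\mathcal{C}(G-Z)|$ via a parity-and-saturation argument. The paper's route is shorter and uses only the iterative part of Mader's theorem; yours is more constructive once the odd-clique decomposition is granted and, as you observe, essentially delivers Lemma~\ref{thm:components_deficiency_relation} at the same time.

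One small caveat: saying the case $G=K_{2p+1}$ is ``degenerate and handled directly'' is too vague. In that case $Z=V(G)$, $G-Z$ is empty, and the identity $\nu(G-Z)=\nu(G)-|Z|$ would read $0=-p-1$, which is false. The paper's proof shares the same oversight (Theorem~\ref{thm:Mader} does not literally yield a $pK_2$-saturated graph when applied to $K_{2p+1}$), so rather than claim it is ``handled'' you should either exclude the complete graph explicitly or remark that it occurs only when $k=1$ and is irrelevant for every downstream use of the lemma.
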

\begin{proof}
  If $G$ is disconnected, then $Z = \emptyset$ and the statement holds trivially.
  If $G$ is connected, then by Theorem~\ref{thm:Mader},
  $Z \ne \emptyset$, 
  and deleting any vertex $v \in Z$ yields a $pK_2$-saturated graph $G - \{v\}$, that is, $\nu(G - \{v\}) = p-1 = \nu(G) - 1$.
Moreover, if $Z \setminus \{v\} \ne \emptyset$, then any other universal vertex $v' \in Z \setminus \{v\}$ remains a universal vertex in $G - \{v\}$ (since $d_{G-\{v\}}(v') = |V(G)| - 2$). 
By repeatedly deleting all vertices in $Z$ one by one, we obtain $\nu(G-Z) = \nu(G) - |Z|$. 
\end{proof}

Using Lemma~\ref{lem:universal_vertex_matching},
we obtain the following lemma
that enables us to analyze the structure of $G$
and its number of edges based on the parameter $|Z|$,
since for a $(p+1)K_2$-saturated graph $G$ with
a given deficiency $k$, 
the number of components of $G-Z$ is determined by $|Z|$.

\begin{lemma} \label{thm:components_deficiency_relation}
Let $G$ be a $(p+1)K_2$-saturated graph with $k \ge 1$, 
$Z$ be the set of its universal vertices. 
The number of components of the subgraph $G-Z$
is given by $|\mathcal{C}(G-Z)| = k + |Z|$.
\end{lemma}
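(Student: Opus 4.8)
The plan is to compute $\nu(G)$ by relating the matching number of $G$ to that of $G-Z$, and then to count how the vertices of $G$ are distributed between $Z$ and the odd cliques of $G-Z$. First I would write $G-Z = C_1 \cup C_2 \cup \dots \cup C_m$ where $m = |\mathcal{C}(G-Z)|$ and each $C_i$ is a complete graph of odd order $|V(C_i)| = 2p_i+1$, using the refined K\'aszonyi--Tuza decomposition quoted above; note that each clique contributes exactly $p_i$ to a maximum matching of $G-Z$ and leaves one vertex unmatched, so $\nu(G-Z) = \sum_{i=1}^m p_i$ and $\defi(G-Z) = m$. Here I should separately dispatch the trivial case $Z = \emptyset$ (the disconnected case), where $G$ itself is the disjoint union of the odd cliques; then $\nu(G) = \sum p_i$ and $n = \sum(2p_i+1) = 2\nu(G) + m$, so $m = n - 2\nu(G) = n - 2p = k = k + |Z|$, as desired.

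For the connected case, the key input is Lemma~\ref{lem:universal_vertex_matching}, which gives $\nu(G-Z) = \nu(G) - |Z| = p - |Z|$. Combining this with the count above, $\sum_{i=1}^m p_i = p - |Z|$. Now I would count vertices: $n = |Z| + \sum_{i=1}^m |V(C_i)| = |Z| + \sum_{i=1}^m (2p_i + 1) = |Z| + 2(p - |Z|) + m = 2p - |Z| + m$. Solving for $m$ yields $m = n - 2p + |Z| = k + |Z|$, which is exactly the claimed identity $|\mathcal{C}(G-Z)| = k + |Z|$.

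The only real obstacle is making sure the structural facts I invoke are all legitimately available: that $G - Z$ is genuinely a disjoint union of \emph{odd} cliques (so that each contributes a leftover unmatched vertex), and that a maximum matching of $G - Z$ is obtained by taking a perfect-except-one matching inside each clique. Both follow from the refined K\'aszonyi--Tuza decomposition recalled in the Preliminaries, together with the elementary fact $\nu(K_{2t+1}) = t$; the additivity $\nu(G-Z) = \sum_i \nu(C_i)$ holds because the $C_i$ are the connected components. With those in hand the rest is just the two linear equations (one counting matched edges, one counting vertices) solved simultaneously, so no delicate estimation is needed — the argument is a short bookkeeping computation once the decomposition is cited correctly.
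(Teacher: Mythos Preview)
Your proof is correct and follows essentially the same approach as the paper: use the odd-clique structure of $G-Z$ to identify $\defi(G-Z)$ with $|\mathcal{C}(G-Z)|$, invoke Lemma~\ref{lem:universal_vertex_matching} to replace $\nu(G-Z)$ by $\nu(G)-|Z|$, and then do a one-line vertex count. The only cosmetic difference is that you split off the case $Z=\emptyset$, which is unnecessary since Lemma~\ref{lem:universal_vertex_matching} already covers it and the computation $|\mathcal{C}(G-Z)| = |V(G-Z)| - 2\nu(G-Z) = (n-|Z|) - 2(p-|Z|) = k+|Z|$ goes through uniformly.
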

\begin{proof}
In $G-Z$ there are $|\mathcal{C}(G-Z)|$ components, each of odd order. Thus $\defi(G-Z) = |\mathcal{C}(G-Z)|$.
By the definition of deficiency and
Lemma~\ref{lem:universal_vertex_matching}, 
\[
\begin{aligned}
  |\mc(G-Z)| &= \defi(G-Z) 
  = |V(G-Z)| - 2\nu(G-Z) 
  = |V(G)| - |Z| - 2(\nu(G) - |Z|) 
  = \defi(G) + |Z|\\
  &= k+|Z|.
\end{aligned}
\]  
This completes the proof.
\end{proof}

\section{Main Theorem}\label{sec:estimate}

The problem of determining 
$\sat(n,(p+1)K_{2})$ is regarded as a discrete optimization
problem that asks to minimize the number of edges
under constraints 
such that there are $n$ vertices;
the matching number is $p$; and
the matching number increases by adding any missing edge
to the graph.
After making preparatory analyses on the optimal value
(i.e., the minimum number of edges in
a $(p+1)K_2$-saturated graph) in Section~3.1,
we state Theorem~\ref{thm:min-edge}, our main theorem,
and its proof in Section~3.2. 


\subsection{Analyses on Number of Edges}
Recall that we are given the number $n$ of vertices
and the matching number $p$, and
we denote by $k=n-2p$ the deficiency. 
For a $(p+1)K_2$-saturated graph $G$ with $n$ vertices,
let $Z$ denote the set of universal vertices,
where we let $z\coloneqq|Z|$ for simplicity.
By Lemma~\ref{thm:components_deficiency_relation},
the number of components in $G-Z$ is
$|\mathcal{C}(G-Z)|=k+z$. 
We denote by $C_1,C_2,\dots,C_{k+z}$ the components
in $G-Z$ and by
$p_1,p_2,\dots,p_{k+z}$ their matching numbers.
Each $C_i$, $i=1,2,\dots,k+z$ has an odd number of vertices
and hence $|V(C_i)|=2p_i+1$ holds. 
Then the number $|E(G)|$ of edges is represented as follows. 


\begin{align}
  2|E(G)| &= \sum_{v \in V(G)} d(v) =
  \sum_{v \in Z} d(v) + \sum_{v \in V(G-Z)}d(v) 
  = z(n - 1)+ \sum_{i=1}^{|\mc(G-Z)|} (2p_i + 1)(2p_i + z)\nonumber\\
  &= z(n - 1) + \sum_{i=1}^{k+z} \big(4p_i^2 + 2p_i z + 2p_i + z\big). \label{eq:sum2}
\end{align}

On the other hand, the number $n=|V(G)|$ 
of vertices in $G$ yields the relation 
$n=z + \sum_{i=1}^{k+z}(2p_i + 1)$, 
which simplifies to
\begin{equation}\label{eq:sum_si}
  \sum_{i=1}^{k+z} p_i = \frac{n-k}{2} - z.
\end{equation}
Substituting~\eqref{eq:sum_si} into~\eqref{eq:sum2},
we have
\begin{equation}\label{eq:simp_sum}
  2|E(G)|=
  -z^2 + (2n-3)z + n - k + 4\sum_{i=1}^{k+z} p_i^2.
\end{equation}
In \eqref{eq:simp_sum}, $n$ and $k$ are constants.
If $|E(G)|=\sat(n,(p+1)K_2)$,
then the right hand of \eqref{eq:simp_sum}
should be minimized over $z$
and $p_1,p_2,\dots,p_{k+z}$. 

\begin{claim}\label{claim:popt}
  For a fixed $z$,
  $\sum_{i=1}^{k+z} p_i^2$ is minimized
  if $|p_i-p_j|\le1$ for all $i,j=1,2,\dots,k+z$. 
\end{claim}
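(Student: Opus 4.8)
The plan is to prove the claim by a standard smoothing (or exchange) argument on the multiset $\{p_1,\dots,p_{k+z}\}$, treating $z$ as fixed. By \eqref{eq:sum_si}, the sum $\sum_{i=1}^{k+z} p_i$ is a fixed nonnegative integer once $z$ is fixed, so I am minimizing $\sum p_i^2$ over all ways of writing that fixed sum as an ordered tuple of nonnegative integers of length $k+z$. The key algebraic fact is that if two of the parts differ by at least $2$, say $p_i \ge p_j + 2$, then replacing the pair $(p_i, p_j)$ by $(p_i - 1, p_j + 1)$ keeps the sum unchanged but strictly decreases the sum of squares: indeed
\[
(p_i-1)^2 + (p_j+1)^2 - p_i^2 - p_j^2 = -2(p_i - p_j) + 2 = -2(p_i - p_j - 1) \le -2 < 0.
\]
So any tuple not satisfying the ``balanced'' condition $|p_i - p_j| \le 1$ for all $i,j$ is not optimal.

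First I would fix $z$ and set $S := \frac{n-k}{2} - z = \sum_{i=1}^{k+z} p_i$, observing $S$ is a fixed nonnegative integer (one should note in passing that feasibility forces $S \ge 0$, which is part of the broader setup). Then I would argue that a minimizer of $\sum p_i^2$ exists, since we are optimizing a function over the finite set of nonnegative integer tuples of length $k+z$ summing to $S$. Next I would take any minimizer and show it must be balanced: if not, pick $i, j$ with $p_i - p_j \ge 2$, perform the transfer above, and derive a contradiction with minimality using the displayed inequality. This establishes that every minimizer is balanced; conversely, any balanced tuple with the prescribed sum $S$ has all its parts equal to either $\lfloor S/(k+z)\rfloor$ or $\lceil S/(k+z)\rceil$, hence all balanced tuples give the same value of $\sum p_i^2$, so balance is also sufficient for optimality. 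This matches the ``if'' phrasing of the claim.

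The argument is essentially routine; the only genuine subtlety is making sure the transfer stays within the feasible region. Decreasing $p_i$ to $p_i - 1$ is fine because $p_i \ge p_j + 2 \ge 2 > 0$; increasing $p_j$ to $p_j + 1$ is always fine since there is no upper bound on individual $p_i$ imposed at this stage (the only constraint is the fixed sum and nonnegativity). One might worry whether changing the $p_i$ values still corresponds to an actual $(p+1)K_2$-saturated graph, but at this point in the paper we are purely minimizing the right-hand side of \eqref{eq:simp_sum} as a function of the integer parameters, so no graph-theoretic feasibility check is needed here; the correspondence back to graphs is handled separately. Thus the main (and only real) obstacle is cosmetic: phrasing the smoothing cleanly and noting that the balanced configuration is unique up to permutation, which makes the stated sufficient condition also necessary.
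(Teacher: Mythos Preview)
Your proposal is correct and follows essentially the same smoothing/exchange argument as the paper: both observe that if $p_i \ge p_j + 2$ then the transfer $(p_i,p_j)\mapsto(p_i-1,p_j+1)$ preserves the sum but strictly decreases $\sum p_i^2$, contradicting minimality. You add some helpful elaboration (existence of a minimizer by finiteness, feasibility of the transfer, and the converse direction showing all balanced tuples achieve the same value), but the core argument is identical.
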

\begin{proof}
  If there exist indices $i,j$ with $p_i \ge p_j + 2$,
  then replacing
  $(p_i,p_j)$ by $(p_i-1,p_j+1)$ preserves $\sum_{i=1}^{k+z} p_i$
  but decreases
  $p_i^2 + p_j^2$ by
  \[
(p_i-1)^2 + (p_j+1)^2 - (p_i^2 + p_j^2) = -2(p_i - p_j - 1) < 0,
  \]
  contradicting the minimality.
\end{proof}

For a fixed $z$, let $S_z \coloneqq \sum_{i=1}^{k+z} p_i$,
$a\coloneqq\lfloor S_z/(k+z) \rfloor$,
and $b\coloneqq S_z-a(k+z)$, that is,
$a$ and $b$ are the quotient and the residue
of $S_z\div(k+z)$.
By Claim~\ref{claim:popt},
the sum $\sum_{i=1}^{k+z}p^2_i$ is minimized 
when there are $b$ components with $p_i=a+1$
and $k+z-b$ components with $p_i=a$. 
We see that
\[\min_{\{p_i\}} \sum_{i=1}^{k+z} p_i^2 = b(a+1)^2 + (k+z-b)a^2=(k+z)a^2 + 2ba + b.\]
By \eqref{eq:sum_si}, we have
$S_z=a(k+z) + b=\frac{n-k}{2}-z$. Eliminating $a$, we have
\begin{equation}\label{eq:min_si2}
  \begin{aligned}
  \min_{\{p_i\}} \sum_{i=1}^{k+z} p_i^2 
  &= (k+z)(\frac{\frac{n-k}{2}-z-b}{k+z})^2 + 2b \frac{\frac{n-k}{2}-z-b}{k+z} + b  \\
  &= \frac{(n+k)^2}{4(k+z)} - n + z - \frac{b^2}{k+z} + b.
\end{aligned}
\end{equation}

We have seen the minimum of
$\sum_{i=1}^{k+z} p_i^2$ in
$2|E(G)|$ (and hence $|E(G)|$) 
for a fixed $z$. Regarding $2|E(G)|$ as a function of $z$,
we define $D(z)$ to be
\begin{align*}
  D(z) &\triangleq -z^2 + (2n-3)z + n - k + 4\min_{\{p_i\}} \sum_{i=1}^{k+z} p_i^2\\
  &=-z^2 + (2n+1)z + \frac{(n+k)^2}{k+z} - 3n-k + 4(b - \frac{b^2}{k+z}),
\end{align*}
where the second equality is due to \eqref{eq:min_si2}
and $-3n-k$ are constants. 
By finding the integer $z$ that  minimizes $D(z)$, 
we determine $\sat(n,(p+1)K_2)$.
We denote by $z^\ast$ the optimal solution,
that is, $D(z^\ast)=\sat(n,(p+1)K_2)$. 

Let us decompose $D(z)$ 
into two functions further. 
Let 
\[g(z) \coloneqq -z^2 + (2n+1)z + \frac{(n+k)^2}{k+z}\ \ \textrm{and}\ \
R(z) \coloneqq 4\big(b - \frac{b^2}{k+z}\big).\]
We refer to $g(z)$ as the main term and to $R(z)$ as the remainder term.
We will later prove that the main term
$g(z)$ dominates the remainder term $R(z)$ in some sense,
and so the behavior of $g(z)$ is particularly important.
We regard $D(z)$, $g(z)$ and $R(z)$ as functions
over the real numbers \(\mathbb{R}\), 
but in the end we only focus on integer values of $z$ in the interval 
$[0, \tfrac{n-k}{2}]$
since $S_z = \frac{n-k}{2}-z \ge 0$.

Note that 
$b$ is an integer from $\{0,1,\dots,k+z-1\}$
by definition. 
The function $x \mapsto x - x^2/(k+z)$ attains its maximum at $x = (k+z)/2$, 
which equals $(k+z)/4$.
Hence
we have
$0 \le R(z) \le k+z$.

\paragraph{Minimizer of the main term $g(z)$.}
The first derivative of the main term $g(z)$ is:
\[
\begin{aligned}
  g'( z) = -2z + 2n + 1 - \frac{(n+k)^2}{(z+k)^2},
\end{aligned}
\]
and the second derivative is:
\[g''(z) = -2 + 2 \frac{(n+k)^2}{(k+z)^3}.\]

Observe that $g''(z)$ decreases on $(-\infty, -k) \text{ and on } (-k, +\infty)$. 
$g''(z) = 0$ has exactly one solution, 
denoted by $z_0 = (n+k)^{2/3} - k$.
Therefore, \( g'(z) \) increases in the interval \( (-k, z_0) \), and decreases on \( (z_0, +\infty) \).

For $z \ne -k$, the equation $g'(z) = 0$,
which is equivalent to 
$(n-z)(2z^2 + (4k-1)z + 2k^2 -n-2k) = 0$,
has three roots:
\[
z_1 = \frac{1}{4} \left( -\sqrt{8k + 8n + 1 } - 4k + 1 \right),
\quad
z_2 = \frac{1}{4} \left( \sqrt{8k + 8n + 1 } - 4k + 1 \right),
\quad 
z_3 = n. 
\]
Among these $z_2$ is important 
and hence we let $\zd\coloneqq z_2$. 
If $k \le \sqrt{n},$ then it holds that 
\[z_1 < z_2 < z_3 ,\quad z_1 < z_0, \quad z_1 < 0.\]

\begin{claim}\label{claim:g-monotone}
  If $\zd > 0$, then
  $g(z)$ is strictly decreasing on $(0, \zd)$
  and strictly increasing on $(\zd, n)$.
  Otherwise (i.e., if $\zd\le0$), 
  $g(z)$ is strictly increasing on $(0, n)$.
\end{claim}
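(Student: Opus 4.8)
The plan is to analyze the sign of $g'(z)$ on the interval $(0,n)$ by combining the information already extracted about the critical points and the convexity behavior of $g'$. We know $g'$ is continuous on $(-k,+\infty)$, increases on $(-k,z_0)$ and decreases on $(z_0,+\infty)$, so it is unimodal there. We also know the three roots $z_1 < z_2 = \zd < z_3 = n$ of $g'(z)=0$, with $z_1 < 0$ and $z_1 < z_0$ whenever $k \le \sqrt n$ (which holds throughout the regime under consideration, since Claim~\ref{claim:g-monotone} is only invoked in that range). Since $g'$ has at most two roots in $(-k,+\infty)$ (one on the increasing branch, one on the decreasing branch), the two relevant roots in $(-k,+\infty)$ must be $\zd$ and $n$; the root $z_1$ lies to the left of $-k$ or is otherwise irrelevant on the domain $(0,n)$.

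First I would handle the case $\zd > 0$. On $(0,n)$ the only roots of $g'$ are $\zd$ and $n$. Evaluate $g'$ at one convenient interior point to fix the sign pattern: for instance, as $z \to 0^+$ one can check $g'(0) = 2n+1 - (n+k)^2/k^2$, which is negative when $k \le \sqrt{n}$ (indeed $(n+k)^2/k^2 \ge n^2/k^2 \ge n > 2n+1$ fails for small $n$, so more carefully I would instead argue via the location of $\zd$ relative to $z_0$). The cleaner route: since $g'$ is unimodal on $(-k,+\infty)$ with a single local maximum at $z_0$, and since it has a root strictly between $0$ and $n$ at $\zd$ and another at $n$, $g'$ must be negative on $(0,\zd)$ and positive on $(\zd,n)$ — the alternative sign pattern would force a third root or contradict unimodality. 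This gives that $g$ is strictly decreasing on $(0,\zd)$ and strictly increasing on $(\zd,n)$.

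Next I would handle the case $\zd \le 0$. Then $\zd \notin (0,n)$, so the only root of $g'$ in $(0,n)$ is $z_3 = n$, i.e. $g'(z) \ne 0$ on the open interval $(0,n)$. By continuity $g'$ has constant sign there; to pin it down, I would again use unimodality: on $(-k,z_0)$, $g'$ is increasing and passes through $0$ at $\zd \le 0$, hence $g'(z) > 0$ for $z \in (\zd, z_0) \cap (0,n)$; and on the decreasing branch $(z_0,+\infty)$, $g'$ stays positive until its next root, which is $z_3 = n$. Stitching these together, $g'(z) > 0$ on all of $(0,n)$, so $g$ is strictly increasing there. A small subtlety to check is whether $z_0$ itself lies in $(0,n)$ and that $z_0 < n$, but $z_0 = (n+k)^{2/3} - k < n$ is immediate for the relevant ranges of $n$ and $k$, and if $z_0 \le 0$ the argument only simplifies since then $g'$ is already on its decreasing branch throughout $(0,n)$.

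The main obstacle I anticipate is the bookkeeping of which roots and which monotonicity branch of $g'$ are actually "active" on $(0,n)$ — this requires carefully invoking the inequalities $z_1 < z_2 < z_3$, $z_1 < z_0$, $z_1 < 0$ from the $k \le \sqrt n$ hypothesis, and ruling out that $z_0$ could sit between $\zd$ and $n$ in a way that spoils the sign pattern (it cannot, since $g'(z_0) \ge g'(\zd) = 0$ with equality only in degenerate cases, placing $z_0$ safely inside the region where $g' \ge 0$). Once the sign of $g'$ is determined on each sub-interval, strict monotonicity of $g$ follows immediately since $g'$ vanishes only at the isolated endpoints $\zd$ and $n$.
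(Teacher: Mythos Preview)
The paper states this claim without a separate proof, treating it as an immediate consequence of the derivative analysis laid out just before it (the unimodality of $g'$ on $(-k,\infty)$, with $g'$ increasing on $(-k,z_0)$ and decreasing on $(z_0,\infty)$, together with the roots $z_1,z_2=\zd,z_3=n$). Your proposal fills in exactly this deduction---observing that $z_1<-k$ so the only zeros of $g'$ on $(-k,\infty)$ are $\zd$ and $n$, and then reading off the sign pattern of $g'$ from the unimodal shape---and the argument is correct and matches the paper's intended line.
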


\subsection{Main Theorem and Its Proof}
Now we are ready to present the main theorem. 
\begin{theorem} \label{thm:min-edge}
  For positive integers $n$ and $p$ such that $n>2p$,
  let $k = n - 2p$. Then we have
\begin{align}
\sat(n,(p+1)K_2) =
\left\{
\begin{array}{ll}
  \displaystyle \min_{z \in \{\zf, \zc\}} \dfrac{D(z)}{2} & \text{if } n > 2k^2 - 2k,\\
  \dfrac{D(0)}{2}  & \text{if } n \le 2k^2 - 2k. 
\end{array}
\right.
\label{eq:min-edge}
\end{align}
\end{theorem}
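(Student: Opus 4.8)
The plan is to prove the formula by analyzing the behavior of $D(z) = g(z) + R(z) - (3n+k)$ over integers $z \in [0, \tfrac{n-k}{2}]$, leveraging that $g(z)$ is concave-like in the sense of Claim~\ref{claim:g-monotone} and that $R(z)$ is small (bounded by $k+z$). First I would settle the easy case $\zd \le 0$: by Claim~\ref{claim:g-monotone}, $g$ is strictly increasing on $(0,n)$, and I would show the remainder term $R(z)$ cannot overturn this, so that $D(z)$ is minimized at $z=0$, giving $\sat(n,(p+1)K_2) = D(0)/2$. The condition $\zd \le 0$ is equivalent to $\SQRT \le 4k-1$, i.e. (after squaring and simplifying) $n \le 2k^2 - 2k$; I would record this equivalence carefully, since it is exactly the case split in the statement. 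Note $k \le \sqrt n$ is not assumed here, but when $n \le 2k^2-2k$ we have $k > \sqrt n$ anyway, so I should double-check that the sign analysis of the roots $z_1, z_2, z_3$ still behaves as needed (in particular that $\zd \le 0$ genuinely forces monotone increase on the relevant interval, not just on $(-k,\infty)$).

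For the main case $n > 2k^2 - 2k$, equivalently $\zd > 0$, the idea is that $g$ decreases then increases with minimum at the real number $\zd$, so the integer minimizer of $g$ alone is one of $\zf, \zc$. The task is to show the same two candidates minimize $D$ once the remainder $R(z)$ is added back. I would argue that for $z$ outside a small neighborhood of $\zd$, the gap $g(z) - \min(g(\zf), g(\zc))$ grows faster than the oscillation of $R$ can compensate — concretely, comparing $D(z)$ at an arbitrary integer $z$ with $D$ at the nearer of $\zf,\zc$, bounding $|R(z) - R(z')|$ by something like $k+z$ (or better) and showing $g$ has already gained more than that. This requires a quantitative lower bound on the second difference of $g$, or on $g(z) - g(\zd)$ as a function of $|z - \zd|$, which follows from $g''$ being bounded away from $0$ on compact subintervals via the explicit formula $g''(z) = -2 + 2(n+k)^2/(k+z)^3$. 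I would also need to confirm $\zf \ge 0$ (so $z=\zf$ is admissible) and $\zc \le \tfrac{n-k}{2}$ (so the corresponding $p_i$'s are nonnegative), both of which should follow from $0 < \zd$ and crude bounds on $\zd$ in terms of $n,k$.

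Finally, both branches require verifying feasibility: that the claimed minimizer $z$ actually corresponds to an existing $(p+1)K_2$-saturated graph with $D(z)/2$ edges. For this I would exhibit the construction — a set $Z$ of $z$ universal vertices together with $k+z$ odd cliques whose matching numbers $p_i$ are as balanced as possible (the $(a,b)$-split from the discussion preceding the theorem) — and check it is $(p+1)K_2$-free with matching number exactly $p$ and saturated (adding any missing edge, necessarily between two distinct cliques, creates a larger matching). This is essentially the reverse of the structural decomposition already recorded from K\'aszonyi--Tuza, so it should be routine, but it must be stated to close the gap between the lower bound (any saturated graph has at least $\min_z D(z)/2$ edges) and the upper bound (such a graph exists).

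I expect the main obstacle to be the quantitative comparison in the case $\zd > 0$: showing that the ``main term dominates the remainder'' with enough room that only $\zf$ and $\zc$ can be optimal. The subtlety is that $R(z)$ is genuinely discontinuous and can jump by order $k+z$ between consecutive integers, so the argument must show that $g$'s convex growth near $\zd$ outpaces this — and one has to be careful near the boundary $z=0$ and near $z = \tfrac{n-k}{2}$ where fewer integer points are available and $\zd$ might itself be close to an integer. A clean way to handle this is probably to prove a lemma of the form ``$D(z) \ge D(z')$ whenever $z'$ is the integer in $\{\zf,\zc\}$ nearest $z$,'' done by splitting into $z < \zf$, $z > \zc$, handled symmetrically using strict monotonicity of $g$ plus an explicit slack bound.
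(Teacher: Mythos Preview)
Your overall strategy---split on the sign of $\zd$, use Claim~\ref{claim:g-monotone} for the monotonicity of $g$, and then show the growth of $g$ away from $\zd$ dominates the oscillation $R(z)\le k+z$---is exactly the paper's approach, and your outline of the $\zd>0$ case (compare $D$ at an arbitrary integer $z$ with $D$ at the nearer of $\zf,\zc$, using convexity of $g$ near $\zd$) is what the paper executes in Lemma~\ref{lemma:z_optimal_equivalence}.

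Two concrete issues, however. First, your side claim ``when $n\le 2k^2-2k$ we have $k>\sqrt n$ anyway'' is false: for instance $k=3$, $n=11$ satisfies $n\le 2k^2-2k=12$ but $k<\sqrt n$. The paper in fact splits the regime $\zd\le 0$ into the subcase $k\le\sqrt n$ (handled directly in Lemma~\ref{lemma:z2Les0}) and the subcase $k>\sqrt n$ (handled by invoking the prior result of Zhang--Lu--Yu, Theorem~\ref{thm:ZhangLuYu}); you will need either to cite that result or to extend the direct argument to cover $n<k^2$ as well, and you should not assume one subcase is vacuous. Second, and more substantively, you do not anticipate that the quantitative comparison in the $\zd>0$ case only goes through for sufficiently large $n$. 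In the paper's computation, the key inequality $g(\zd+1)-g(\zd)$ minus the bound on $R$ reduces to $\tfrac{3}{4}\sqrt{8n+1}-9.25>0$, which first holds at $n=19$; for $n\le 18$ the paper closes the argument by exhaustive enumeration (Lemma~\ref{lemma:enum}, Table~\ref{tab:enum}). Your plan as written would stall at this point, so you should expect to need a finite check for small $n$ (or a sharper bound on $R$ than $k+z$, which does not appear to be available).

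Your remark about feasibility---exhibiting the balanced-clique construction and verifying it is $(p+1)K_2$-saturated---is a good point; the paper leaves this implicit in the structural decomposition of Section~2, but stating it explicitly is cleaner.
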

\noindent
The theorem tells that,
if $n\le 2k^2-2k$, then $z^\ast=0$, and otherwise,
$z^\ast\in\{\zf,\zc\}$. 


Let us describe the strategy of the proof.
The condition whether $n > 2k^2 - 2k$
or $n \le 2k^2 - 2k$ is equivalent to whether
$\zd>0$ or $\zd\le 0$ as follows;
Recall the definition of $\zd$.  
\[
\zd = \frac{1}{4}\left( \SQRT - 4k + 1 \right),
\]
and $\zd > 0$ means $\SQRT > 4k-1$,
where $4k-1>0$ holds by $k\ge1$. 
Squaring both sides of the inequality gives
\[
8n+8k+1 > (4k-1)^2 = 16k^2 - 8k + 1
\;\Longleftrightarrow\;
n > 2k^2 - 2k.
\]
By Claim~\ref{claim:g-monotone},
if $\zd>0$, then the main term $g(z)$ is minimized
at $z=\zd$, and otherwise, $g(z)$ is minimized at $z=0$. 
The proof for Theorem~\ref{thm:min-edge}
observes the following four cases:
Combining these results,
we show the theorem.

\begin{itemize}
\item {\bf Case (I):}
  $n\ge 19$ and $\zd>0$ (i.e., $n>2k^2-2k$).
  We show in Lemma~\ref{lemma:z_optimal_equivalence}
  that the integer minimizer
  $z^\ast$ for $D(z)$ is around the minimizer $\zd$ for $g(z)$;
  to be more precise,
  $z^\ast\in\{\lfloor \zd \rfloor, \lceil \zd \rceil\}$ holds.
\item {\bf Case (II):}
  $\zd\le0$ and $k\le\sqrt{n}$
  (i.e., $k^2\le n\le 2k^2-2k$).
  We show in Lemma~\ref{lemma:z2Les0} that 
  $z^\ast=0$ holds.
\item {\bf Case (III):}
  $n\le18$ and $k\le\sqrt{n}$.
  We show in Lemma~\ref{lemma:enum} that
  all possible $(n,k)$ satisfy
  the statement of Theorem~\ref{thm:min-edge}
  by enumeration.
\item {\bf Case (IV):}
  The remaining case such that
  $\zd\le0$ and $k>\sqrt{n}$
  (i.e., $n<k^2\le 2k^2-2k$).
  This case is already covered by 
  Theorem~\ref{thm:ZhangLuYu}.
\end{itemize}
%


The remainder of this section is devoted to
our observation on Cases (I) to (IV),
followed by the proof for Theorem~\ref{thm:min-edge}. 

\begin{lemma}[Case (I)]\label{lemma:z_optimal_equivalence}
  For positive integers $n$ and $p$
  such that $n\ge19$ and $n>2p$, 
  if $\zd>0$, then $z^\ast\in\{\zf,\zc\}$ holds. 
\end{lemma}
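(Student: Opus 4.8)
\textbf{Proof plan for Lemma~\ref{lemma:z_optimal_equivalence}.}
The plan is to show that the integer minimizer $z^\ast$ of $D(z)$ cannot lie outside the interval $[\zf,\zc]$, by comparing $D$ at a candidate point far from $\zd$ with its value at the nearest endpoint of that interval. Write $D(z)=g(z)+R(z)-3n-k$. By Claim~\ref{claim:g-monotone}, $g$ is strictly decreasing on $(0,\zd)$ and strictly increasing on $(\zd,n)$, so for any integer $z\le \zf-1$ we have $g(z)\ge g(\zf)$ and in fact $g(z)-g(\zf)$ is bounded below by the drop of $g$ over one unit step just left of $\zf$; similarly $g(z)-g(\zc)$ is large for $z\ge\zc+1$. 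The remainder term satisfies the a priori bound $0\le R(z)\le k+z$ established above, so it can shift $D$ by at most $O(k+z)=O(n)$ in either direction. The strategy is therefore to prove that the convexity-type gain in $g$ at distance $\ge 1$ from $\{\zf,\zc\}$ strictly exceeds the worst-case fluctuation $|R(z)-R(z')|\le k+\max(z,z')$ of the remainder, once $n\ge 19$.

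The key steps, in order, are: (1) record that $D(z)=g(z)+R(z)-3n-k$ and that it suffices to rule out integer minimizers in $(0,\zf)$ and in $(\zc,n)$ (the endpoints $0$ and values $\ge n$ are excluded since $S_z=\frac{n-k}{2}-z\ge 0$ forces $z\le\frac{n-k}{2}<n$, and $z=0$ is handled by the hypothesis $\zd>0$ together with $g$ decreasing near $0$). (2) Lower-bound $g(z_1')-g(z_2')$ for integers $z_1'<z_2'$ on the decreasing branch, and symmetrically on the increasing branch, using that $g$ is $C^2$ with $g''$ controlled: since $g''(z)=-2+2(n+k)^2/(k+z)^3$, on the relevant range $g''$ is bounded, and a second-order Taylor estimate gives $g(z)-g(\zc)\ge c\,(z-\zc)^2$ and $g(z)-g(\zf)\ge c\,(\zf-z)^2$ for an explicit constant $c$ depending only on the location of $\zd$ relative to $z_0$. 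Actually cleaner: use that $g'$ is increasing on $(-k,z_0)$ and decreasing on $(z_0,+\infty)$, together with $g'(\zd)=0$, to get monotone lower bounds on $|g'|$ away from $\zd$, hence telescoping lower bounds on the increments of $g$. (3) Upper-bound $|R(z)-R(z^\ast)|$ by $k+z$ (or a slightly sharper bound exploiting that $b<k+z$, so $R(z)\le k+z$ with equality impossible). (4) Combine: for $z$ at integer distance $t\ge 1$ from the interval $[\zf,\zc]$, show $g(z)-\min_{w\in\{\zf,\zc\}}g(w)\ge (\text{something growing in }t\text{ and in }n) > k+z \ge |R(z)-R(w)|$, whence $D(z)>D(w)$ and $z$ is not the minimizer. (5) Deduce $z^\ast\in\{\zf,\zc\}$. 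The role of the hypothesis $n\ge 19$ is exactly to make the constant in step (4) win; smaller $n$ is deferred to Case~(III).

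The main obstacle I expect is step (2)/(4): getting a clean, genuinely-valid lower bound on the one-step increments of $g$ near the minimizer that still beats the linear-in-$n$ slack coming from $R$. Near $z=\zd$ the function $g$ is flat to first order, so the gain over the \emph{first} unit step away from $\{\zf,\zc\}$ is only second order, $\Theta(g''(\zd))$, and one must check $g''(\zd)$ is not too small — this is where the relationship between $\zd$ and $z_0=(n+k)^{2/3}-k$, and the assumption $n>2k^2-2k$ (equivalently $\zd>0$), enter to pin down the range of $k$ and hence a usable lower bound on $g''(\zd)$. A secondary subtlety is that $R(z)$ is not continuous in the naive sense (it depends on the residue $b$ of $(\frac{n-k}{2}-z)\div(k+z)$), so one cannot differentiate it; the safe route is to use only the crude envelope $0\le R(z)\le k+z$ throughout and never attempt finer control, accepting that this forces the $n\ge 19$ threshold. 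If the first-step second-order gain turns out too weak for some small residual range of $(n,k)$ with $n\ge 19$, those finitely many pairs can be absorbed into the enumeration argument of Lemma~\ref{lemma:enum}, but I would first try to push the constant through directly.
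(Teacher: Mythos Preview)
Your plan is essentially the same approach as the paper's proof: decompose $D=g+R-3n-k$, use the crude envelope $0\le R(z)\le k+z$, exploit convexity of $g$ on a neighbourhood of $\zd$ (established by checking $\zc+1<z_0$), and show that the one-step increment of $g$ away from $\{\zf,\zc\}$ dominates the remainder for $n\ge 19$. The paper carries this out by reducing via convexity to an \emph{exact} algebraic computation of $g(\zd\pm1)-g(\zd)$, obtaining $g(\zd+1)-g(\zd)>\sqrt{8n+8k+1}-8$ and $g(\zd-1)-g(\zd)>\sqrt{8n+8k+1}$, then comparing against $R\le k+\zd+1=\tfrac{1}{4}(\sqrt{8n+8k+1}+5)$; your Taylor/$g''$ heuristic is the right intuition (indeed $g''(\zd)=\Theta(\sqrt{n})$, matching $k+\zd=\Theta(\sqrt{n})$), but to close the argument with the sharp threshold $n\ge 19$ you will need the explicit closed-form increments rather than an asymptotic second-order estimate.
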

%
\begin{proof}
  The proof analyzes the behavior of $D(z)$ around $\zd$.
  We show that the function the main term $g(z)$
  is convex in the relevant interval and that the "jump" 
  $g(z+1) - g(z)$ or $g(z) - g(z-1)$ when moving away from $\zd$ is large enough to dominate the remainder term $R(z)$.
  This requires a detailed algebraic computation of $g(\zd + 1) - g(\zd)$ and $g(\zd - 1) - g(\zd)$ to show they are sufficiently large.

  Let $k=n-2p$. 
  Let $\zt \in \mathbb{Z}$ be such that
$\zt \notin \{\zf,\zc\}$ and $\zt \in [0,\frac{n-k}{2}]$.
We will prove that, for all $n \ge 19$, such a $\zt$ cannot minimize $D$.
This will imply that any minimizer $z^*$ of $D$ must belong to $\{\zf,\zc\}$.

We write $\zt$ in terms of its distance from $\zd$.
We distinguish two cases:
\[
\begin{cases}
\text{if } \zt \ge \zc + 1, & 
\zt = \zd + T + \dL, \quad \dL = \zc - \zd,\\[4pt]
\text{if } \zt \le \zf - 1, &
\zt = \zd - T - \dR, \quad \dR = \zd - \zf,
\end{cases}
\]
where $T \ge 1$ is the integer offset from the nearest of $\zf$ or $\zc$.

We will show that
\[
\begin{aligned}
  &D(\zt)  = g(\zd + T + \dL) + R(\zd + T + \dL) > D(\zc) = g(\zc) + R(\zc)
  &&\text{if } \zt \ge \zc,\\
  &D(\zt) = g(\zd - T - \dR) + R(\zd - T - \dR) > D(\zf) = g(\zf) + R(\zf)
  &&\text{if } \zt \le \zf,
\end{aligned}
\]
it suffices to prove
\[
\begin{aligned}
  &g(\zd + T + \dL) > g(\zc) + R(\zc)
  &&\text{if } \zt \ge \zc,\\
  &g(\zd - T - \dR) > g(\zf) + R(\zf)
  &&\text{if } \zt \le \zf.
\end{aligned}
\]

Recall that by Claim~\ref{claim:g-monotone}, $g(z)$ is strictly decreasing on $(0,\zd)$ and strictly increasing on $(\zd,n)$.
Therefore, for $T \ge 1$,
\[
\begin{aligned}
g(\zd + T + \dL) &\ge g(\zd + 1 + \dL),\\
g(\zd - T - \dR) &\ge g(\zd - 1 - \dR).
\end{aligned}
\]
Hence it is enough to prove the following two inequalities:
\[
\begin{aligned}
  &g(\zd + 1 + \dL) > g(\zc) + R(\zc),
  &&\text{if } \zt \ge \zc,\\
  &g(\zd - 1 - \dR) > g(\zf) + R(\zf),
  &&\text{if } \zt \le \zf.
\end{aligned}
\]

To estimate these increments, 
we claim that $\zc+1 < z_0$ for $n \ge 9$ 
where $z_0$ is the unique real solution of $g''(z)=0$.
Since $g''(z)>0$ on $(-k,z_0)$, this implies $g$ is convex on $[\zf-1,\zc+1]$ for all $n\ge 9$.
Recall that
\[
\zd = \frac{\SQRT - 4k + 1}{4}
\]
and
\[
z_0 = (n+k)^{2/3} - k.
\]
Since $\zc = \zd + \dL < \zd + 1$, 
it suffices to show 
\[
\zd + 2 < (n+k)^{2/3} - k,
\]
or, rearranging,
\begin{equation}\label{ineq:convex}
  (n+k)^{2/3} - \frac{\SQRT}{4} > \frac{9}{4}.
\end{equation}

Set
\[
f(n,k) = (n+k)^{2/3} - \frac{\SQRT}{4}.
\]
Note that $f$ depends on $n$ and $k$ only through $n+k$, so $n$ and $k$ play a symmetric role.
Moreover,
\[
\frac{\partial f}{\partial k}
= \frac{2}{3 (n+k)^{1/3}}
  - \frac{1}{\sqrt{8n+8k+1}}
> 0
\quad\text{for all } n,k \ge 1.
\]
Thus $f(n,k)$ is strictly increasing in $k$. Since we are assuming $k \ge 1$, we have
\[
f(n,k) \ge f(n,1)
= (n+1)^{2/3} - \frac{\sqrt{8n+9}}{4}.
\]
The right-hand side is increasing in $n$, 
and at $n=9$ this value is approximately $2.39$:
\[
(10)^{2/3} - \frac{\sqrt{81}}{4}
\approx 2.39 > \frac{9}{4}.
\]
Hence for all $n \ge 9$ and $k \ge 1$, $f(n,k) > 9/4$,
then we obtain that~\eqref{ineq:convex} holds, thereby  $\zc+1 < z_0$.

Since $\zc+1 < z_0$, we have $g''(z) > 0$ throughout the interval $(\zf-1,\zc+1)$,
and $g'(\zd)=0$.
By the Mean Value Theorem, 
there exist $c_1 \in (\zd + 1, \zd + 1 + \dL), c_2 \in (\zd, \zd + \dL)$ such that 
\[g(\zd + 1 + \dL) - g(\zd + 1) = g'(c_1)\dL \ge g'(c_2)\dL = g(\zd + \dL) - g(\zd),\]
hence
\[g(\zd + 1 + \dL) - g(\zd + \dL) \ge g(\zd + 1) - g(\zd).\] 
Similarly
\[g(\zd - 1) - g(\zd - 1 - \dR)  \le g(\zd) - g(\zd - \dR),\]
which is 
\[ g(\zd - 1 - \dR) -g(\zd - \dR) \ge g(\zd - 1) -  g(\zd). \]
We now compute the one-step differences of $g$:
{\small
\begin{align}
  g(\zd+1)-g(\zd)
  &= 2n - 2\zd - \frac{(n+k)^2}{(k+\zd)(k+\zd+1)} \nonumber\\
  &= 2n + 2k - \frac{16(n+k)^2}{(\SQRT+1)(\SQRT+5)} - \frac{\SQRT+1}{2} \nonumber\\
  &= 2(n+k) - \frac{16(n+k)^2(\SQRT-1)}{(8n+8k)(\SQRT+5)} - \frac{\SQRT+1}{2} \nonumber\\
  &= 2(n+k)\frac{6}{\SQRT+5} - \frac{\SQRT+1}{2} \nonumber\\
  &= 2(n+k)\frac{6(\SQRT-5)}{8n+8k-24} - \frac{\SQRT+1}{2} \nonumber\\
  &= (n+k-3+3)\frac{3(\SQRT-5)}{2n+2k-6} - \frac{\SQRT+1}{2} \nonumber\\
  &= \SQRT - 8 + \frac{9(\SQRT-5)}{2(n+k-3)} \nonumber\\
  &> \SQRT - 8, \nonumber
\end{align}
}
and
{\small
\[
\begin{aligned}
  g(\zd - 1) - g(\zd) &= 2\zd - 2n - 2 + \frac{(n+k)^2}{(k+\zd)(k+\zd-1)}\\
  &= -2k -2n -2 + \dfrac{16(n+k)^2}{(\SQRT + 1)(\SQRT -3)} + \dfrac{\SQRT + 1}{2} \\
  &=(n+k)\big( \dfrac{2(\SQRT - 1)}{\SQRT - 3} - 2 \big) - 2  + \dfrac{\SQRT + 1}{2}  \\
  &= (n+k) \dfrac{4(\SQRT + 3)}{8n+8k-8} - 2 + \dfrac{\SQRT + 1}{2}\\
  &= \sqrt{8n+8k+1} + \frac{\sqrt{8n+8k+1}+3}{2n+2k-2}\\
  &> \sqrt{8n+8k+1}.
\end{aligned}
\]
}
Hence we obtain 
\begin{equation}\label{inq:delta1}
  g(\zd + 1) - g(\zd) > \SQRT - 8,
\end{equation}
\begin{equation}\label{inq:delta2}
  g(\zd-1) - g(\zd) > \SQRT.
\end{equation}
Comparing $D(\zd+\dL+T)$ with $D(\zd+\dL)$,
we have
\[ \begin{aligned} D(\zd+\dL + T) - D(\zd+\dL) 
  &=g(\zd +\dL + T) - g(\zd+\dL) + R(\zd+\dL + T) - R(\zd+\dL)\\ 
  &\ge g(\zd +\dL + 1) - g(\zd+\dL) + R(\zd+\dL + T) - R(\zd+\dL) \\ 
  &\ge g(\zd +\dL + 1) - g(\zd+\dL) - R(\zd+\dL)\\ 
  &\ge g(\zd + 1) - g(\zd) - R(\zd+\dL)\\ 
  &\ge g(\zd + 1) - g(\zd) - (k+\zd + 1),
 \end{aligned} \] 
since $k + \zd + 1 = \frac{\SQRT + 1}{4} + 1 $, 
by~\eqref{inq:delta1},
\[ \begin{aligned} 
  D(\zd+\dL + T) - D(\zd+\dL) 
  &\ge g(\zd + 1) - g(\zd) - \frac{\SQRT + 5}{4}\\ 
  & >\SQRT - 8- \frac{\SQRT + 5}{4}\\ 
  &= \frac{3\SQRT}{4} - 9.25\\
  &> \frac{3\sqrt{8n+1}}{4} - 9.25.
\end{aligned} \] 
Note that $\frac{3\sqrt{8\times 19+1}}{4} - 9.25 \approx 0.027 > 0,$ 
we obtain $D(\zd + \dL + T) > D(\zd + \dL)$ for $n \ge 19$. 

Then for $D(\zd - T -\dR)$ and $D(\zd - \dR)$,
similarly, 
\[ \begin{aligned} D(\zd-T-\dR) - D(\zd-\dR) 
  &= g(\zd-T-\dR) - g(\zd-\dR) + R( \zd -T - \dR) - R(\zd - \dR)\\ 
  &\ge g(\zd-1-\dR) - g(\zd-\dR) + R( \zd -T - \dR) - R(\zd - \dR) \\ 
  &\ge g(\zd-1-\dR) - g(\zd-\dR) - R(\zd - \dR)\\
  & \ge g(\zd-1) - g(\zd) - R(\zd - \dR) \\ 
  &\ge g(\zd - 1) - g(\zd) -(k+\zd - \dR)\\ 
  &\ge g(\zd - 1) - g(\zd) -(k+\zd ) \\ 
  &> \SQRT - \dfrac{\SQRT + 1}{4} \\ 
  &> \frac{3\sqrt{8n+1}}{4} - \frac{1}{4}\\
  & > 0
\end{aligned} \]
for all $n \ge 19$.

Combining the two directions, we conclude that for all $n \ge 19$ and $k \ge 1$,
no $\zt \notin \{\zf,\zc\}$ can give the minimum value of $D$.
Hence any minimizer $z^*$ of $D$ must satisfy
\[
z^* \in \{\zf,\zc\},
\]
as claimed.
\end{proof}


\begin{lemma}[Case (II)]\label{lemma:z2Les0}
  For positive integers $n$ and $p$ such that $n>2p$,
  let $k=n-2p$. 
  If $\zd \le 0$ and $k\le\sqrt{n}$
  (or equivalently, if $k^2\le n \le 2k^2 - 2k$), 
  then $z^* = 0$. 
\end{lemma}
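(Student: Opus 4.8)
\emph{Proof idea and plan.}
When $\zd\le 0$, Claim~\ref{claim:g-monotone} tells us that the main term $g$ is strictly increasing on $(0,n)$, so $g$ grows the moment we move $z$ away from $0$. The plan is to show that the very first step $g(1)-g(0)$ is at least $k$, which is enough to absorb any change in the bounded remainder term, since $0\le R(z)\le k+z$ gives $R(0)\le k$ and $R(z)\ge 0$ for every admissible $z$; then $D(z)=g(z)+R(z)$ can never drop below $D(0)$. First I would unpack the hypotheses: by the equivalence recorded just before the case analysis, $\zd\le 0$ is the same as $n\le 2k^2-2k$, and $k\le\sqrt n$ is $n\ge k^2$, so together they force $k\ge 2$ and $k^2\le n\le 2k^2-2k$ (the range is empty unless $k\ge 2$). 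Since $g$ is strictly increasing on $(0,n)$, for every integer $z$ with $1\le z\le\tfrac{n-k}{2}$ -- the range over which we optimize -- we have $g(z)\ge g(1)$.

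The computational heart is the one-step increment
\[
g(1)-g(0)=\Bigl((2n+1)-1+\tfrac{(n+k)^2}{k+1}\Bigr)-\tfrac{(n+k)^2}{k}=2n-\frac{(n+k)^2}{k(k+1)},
\]
so that $g(1)-g(0)\ge k$ is equivalent, after clearing the positive denominator $k(k+1)$, to
\[
n^2-2k^2n+k^2(k+2)\le 0.
\]
For $k\ge 2$ the left-hand quadratic in $n$ has the real roots $k^2\pm k\sqrt{(k-2)(k+1)}$, so this holds precisely when $k^2-k\sqrt{(k-2)(k+1)}\le n\le k^2+k\sqrt{(k-2)(k+1)}$. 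The hypothesis $n\ge k^2$ clears the lower bound, and for the upper bound I would check $2k^2-2k\le k^2+k\sqrt{(k-2)(k+1)}$, i.e. $k-2\le\sqrt{(k-2)(k+1)}$, which holds for all $k\ge 2$ (it is $0\le 0$ at $k=2$, and follows by squaring for $k\ge 3$). Hence $g(1)-g(0)\ge k$ throughout the admissible range of $n$.

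Putting the pieces together, for any integer $z$ with $1\le z\le\tfrac{n-k}{2}$,
\[
D(z)-D(0)=\bigl(g(z)-g(0)\bigr)+\bigl(R(z)-R(0)\bigr)\ge\bigl(g(1)-g(0)\bigr)-R(0)\ge k-k=0,
\]
so $z=0$ attains $\min_z D(z)$, which is the assertion $z^\ast=0$. The main obstacle is nothing more than the quadratic verification above: that is exactly where both hypotheses are used (the upper bound on $n$ from $\zd\le 0$, the lower bound from $k\le\sqrt n$), and it is where the tight case $k=2$ lives -- there $g(1)-g(0)=k$ exactly, so $D(0)=D(1)$ is possible and "$z^\ast=0$" must be read as "$z=0$ attains the minimum of $D$", not that it is the unique minimizer.
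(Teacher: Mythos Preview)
Your proof is correct and follows essentially the same approach as the paper: bound $D(z)-D(0)\ge g(1)-g(0)-R(0)\ge g(1)-g(0)-k$ using $R(0)\le k$ and the monotonicity of $g$, then verify $g(1)-g(0)\ge k$ on the range $k^2\le n\le 2k^2-2k$. The only difference is cosmetic --- the paper checks the key inequality $2n-k-\tfrac{(n+k)^2}{k(k+1)}\ge 0$ by noting its derivative in $n$ is nonpositive and evaluating at the right endpoint $n=2k^2-2k$ (getting $\tfrac{3k(k-2)}{k+1}\ge 0$), whereas you solve the equivalent quadratic $n^2-2k^2n+k^2(k+2)\le 0$ directly and check interval containment; both are routine.
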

\begin{proof}
Since $k \le \sqrt{n}$ implies $k^2 \le n$, the condition is $k^2 \le n \le 2k^2 - 2k$.
We compare $D(z)$ with $D(0)$ for any integer $z \ge 1$:
\[
\begin{aligned}
  D(z) - D(0) &= g(z) + R(z) - g(0) - R(0) \\
  & \ge g(z) - g(0) - R(0),
\end{aligned}
\]
when $\zd \le 0$, by Claim~\ref{claim:g-monotone}, 
the function $g(z)$ is increasing for $z \in [0, n]$, then
\[
\begin{aligned}
  D(z) - D(0)  & \ge g(z) - g(0) - R(0), \\
  & \ge g(1) - g(0) - R(0)\\
  & \ge 2n+ \frac{(n+k)^2}{k+1} - \frac{(n+k)^2}{k} - k\\
  & = 2n -k - \frac{(n+k)^2}{k(k+1)}.
\end{aligned}
\]
Define $H(n) = 2n -k - \frac{(n+k)^2}{k(k+1)}$, for fixed $k$, $H'(n)=\frac{2(k^2-n)}{k(k+1)}\le0$ for $n \ge k^2$,
hence $H$ is decreasing on $[k^2,\,2k^2 - 2k]$. 
At the right endpoint,
\[
H(2k^2-2k)=\frac{3k(k-2)}{k+1}.
\]
Notice that $n \le 2k^2 - 2k \Rightarrow k \ge \sqrt{\frac{2n+1}{4}}+\frac 12 \ge \frac{\sqrt 3 + 1}{2}$,
then $k \ge 2$, $H(2k^2-2k) \ge 0$.

Therefore $H(n)\ge0$ for all $n\in[k^2,\,2k^2-2k]$, which implies $D(z)\ge D(0)$ for all $z\ge1$.
Therefore for all integers $z\ge 1$, 
$D(z)-D(0)\ge 0$,
the minimum of $D(z)$ over $z\in[0,\tfrac{n-k}{2}]\cap\mathbb{Z}$ is reached at $z=0$.
  
\end{proof}


\begin{lemma}[Case (III)]\label{lemma:enum}
  For positive integers $n$ and $p$
  such that $2p<n\le18$, let $k=n-2p$.
  If $k\le\sqrt{n}$, then \eqref{eq:min-edge} holds.  
\end{lemma}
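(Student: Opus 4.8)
The plan is to verify Lemma~\ref{lemma:enum} by finite enumeration, since the hypotheses $2p < n \le 18$ and $k = n-2p \le \sqrt{n}$ restrict us to a small explicit list of pairs $(n,k)$. First I would enumerate all admissible $(n,k)$: for each $n$ from $3$ to $18$, the deficiency $k$ must have the same parity as $n$ (because $k = n - 2p$), satisfy $1 \le k \le \sqrt{n}$, and correspond to a valid $p = (n-k)/2 \ge 1$. This yields only a handful of cases — for instance $k=1$ for all odd $n$ up to $17$, $k=2$ for even $n$ with $n \ge 4$, $k=3$ for $n \in \{9,11,13,15,17\}$ with $9 \le n$, and $k=4$ for $n \in \{16,18\}$ — a list short enough to handle one entry at a time.

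For each such pair $(n,k)$, I would compute the claimed right-hand side of \eqref{eq:min-edge} directly. When $n \le 2k^2 - 2k$ the formula asserts $z^\ast = 0$, so I would evaluate $D(0) = -0 + 0 + (n+k)^2/k + \frac{?}{}$ — more precisely $D(0) = \frac{(n+k)^2}{k} - 3n - k + 4(b - b^2/k)$ with $b = S_0 \bmod k$ and $S_0 = \frac{n-k}{2}$ — and report $D(0)/2$. When $n > 2k^2 - 2k$ the formula asserts $z^\ast \in \{\zf, \zc\}$, so I would compute $\zd = \tfrac14(\SQRT - 4k+1)$, evaluate $D$ at the two nearest integers, and take the smaller. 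In either regime, to \emph{prove} this is the true minimum (not just the claimed one), I would also compute $D(z)$ for every integer $z$ in the feasible range $[0, \frac{n-k}{2}]$ — again a short finite list since $\frac{n-k}{2} \le 8$ — and confirm the minimum is attained where the theorem predicts. Each $D(z)$ evaluation is an elementary rational arithmetic computation: fix $z$, set $S_z = \frac{n-k}{2} - z$, $a = \lfloor S_z/(k+z)\rfloor$, $b = S_z - a(k+z)$, and use $D(z)/2 = \frac12\big(-z^2 + (2n-3)z + n - k\big) + 2\big(b(a+1)^2 + (k+z-b)a^2\big)$.

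The main obstacle is not conceptual but bookkeeping: ensuring the enumeration of $(n,k)$ pairs is complete (correct parity and range constraints) and that the per-pair arithmetic is error-free, since a single miscalculated $D(z)$ could hide a spurious counterexample. I would organize the verification as a table indexed by $(n,k)$, listing in each row the value $2k^2-2k$ (to identify the regime), the feasible $z$-range, the minimizing $z$ and minimal $D(z)/2$ found by brute force, and the theorem's predicted value, then check the last two columns agree. To keep the argument self-contained and checkable I would present this table in the proof and remark that each entry follows from the closed form for $D(z)/2$ above; the reader can reconstruct any row by hand. A useful sanity check along the way: for $k = 1$ one expects the extremal graph to be a single clique or a clique plus a universal vertex structure, and the computed values should be consistent with small known cases (e.g. $\sat(3, 2K_2) = 3$ from $K_3$), which guards against systematic errors.
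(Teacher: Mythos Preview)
Your proposal is correct and takes essentially the same approach as the paper: both verify the lemma by finite enumeration, computing $D(z)$ for every integer $z$ in the feasible range $[0,(n-k)/2]$ and confirming that the minimizer $z^*$ lies where the formula predicts, then recording the outcome in a table indexed by $(n,k)$. Your write-up is a bit more explicit about the arithmetic for $D(z)$ and adds a sanity check against small known values of $\sat$, while the paper's table happens to start at $n=4$ rather than $n=3$, but the method is identical.
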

\begin{proof}
  We show the evidence in Table~\ref{tab:enum}.
  Since $n-k=2p$,
  we restrict ourselves to $(n,k)$
  such that $n$ and $k$ have the same parity. 
  For each feasible pair $(n,k)$, 
  we determined the minimizer of $D(z)$ by direct computation: for every integer $z$ in the feasible range $0 \le z \le \tfrac{n-k}{2}$ we evaluated $D(z)$ and recorded the minimizing value $z^*$. 
  We observed that, in every case,
  if $\zd>0$ (i.e., $n>2k^2-2k$),
  $z^*$ coincides with one of $\zf$ or $\zc$,
  and if $\zd\le0$ (i.e., $n \le 2k^2-2k$),
  $z^*=0$ holds. 
\end{proof}
  
The program code used for the enumeration in Table~\ref{tab:enum} is available at
\url{https://github.com/zhouxiaoteng2000/p-plus-1-K2-sat-enumeration.}

\begin{table}[t!]
\centering
\caption{Integer optimizers $z^*$ for $D(z)$
  and real optimizers $z^\dagger$ for the main term $g(z)$
  for all $(n,k)$ such that
  $n\in\{4,5,\dots,18\}$ and $k\in\{1,2,\dots,\lfloor\sqrt{n}\rfloor\}$ have the same parity}
\label{tab:enum}

\begin{minipage}[t]{0.48\textwidth}
\centering
\textbf{$4 \le n \le 12$}\\[4pt]
\begin{tabular}{rrrr}
\hline
$n$ & $k$ & $z^*$ & $z^\dagger$ \\
\hline
4&2&0&0.00 \\
5&1&1&1.00 \\
6&2&0&0.27 \\
7&1&1&1.27 \\
8&2&0&0.50 \\
9&1&1&1.50 \\
9&3&0&-0.29 \\
10&2&1&0.71 \\
11&1&2&1.71 \\
11&3&0&-0.09 \\
12&2&1&0.91 \\

\hline
\end{tabular}
\end{minipage}
\begin{minipage}[t]{0.48\textwidth}
\centering
\textbf{$13 \le n \le 18$}\\[4pt]
\begin{tabular}{rrrr}
\hline
$n$ & $k$ & $z^*$ & $z^\dagger$ \\
\hline
13&1&2&1.91 \\
13&3&0&0.09 \\
14&2&1&1.09 \\
15&1&2&2.09 \\
15&3&0&0.26 \\
16&2&1&1.26 \\
16&4&0&-0.58 \\
17&1&2&2.26 \\
17&3&0&0.42 \\
18&2&1&1.42 \\
18&4&0&-0.42 \\

\hline
\end{tabular}
\end{minipage}
\end{table}


For Case (IV),
Theorem~\ref{thm:ZhangLuYu} indicates that when $z=0$ (i.e., no universal vertices), the extremal configuration consists of a disjoint union of $A$ copies of $K_{2t-1}$ and $B = k - A$ copies of $K_{2t+1}$ for an integer $t$,
under the constraints:
\[ 
A + B = k, \qquad B(2t + 1) + A(2t - 1) = n\,.
\] 
The second constraint simplifies to $(t-1)k + B = \frac{n-k}{2}$, yielding 
\[ 
t - 1 = \frac{n-k-2B}{2k}\,. 
\] 
The total number of edges in this graph is: 
\[ 
|E(G)| \;=\; B \binom{2t + 1}{2} + A \binom{2t - 1}{2}\,, 
\] 
so twice the number of edges is 
\begin{equation*}
\begin{aligned}
2\,|E(G)| &= 2B\binom{2t + 1}{2} + 2A \binom{2t - 1}{2} \\
&= 4k(t-1)^2 + 2k(t-1) + 8(t-1)B + 6B \\[3pt]
&= \frac{n^2}{k} - n + 4B - \frac{4B^2}{k}\,. 
\end{aligned}
\end{equation*}
This value is precisely $D(0)$ in our framework:
\[ 
D(0) = \frac{(n+k)^2}{k} - 3n - k + 4\Big(b - \frac{b^2}{k}\Big) = \frac{n^2}{k} - n + 4b - \frac{4b^2}{k}\,,
\] 
where in the last expression we use the fact that $b = B = \big(\frac{n-k}{2}\big) \bmod k$ in the $z=0$ configuration.

We now complete the proof of Theorem~\ref{thm:min-edge}.

\begin{proof}[Proof of Theorem~\ref{thm:min-edge}]
We consider three cases:
\begin{enumerate}
    \item[(1)] $k > \sqrt{n}$. 
    By Theorem~\ref{thm:ZhangLuYu}, the minimum of $D(z)$ is attained at $z=0$. 
    Thus $\sat(n,(p+1)K_2) = D(0)/2$.
    \item[(2)] $k \le \sqrt{n}$ and $n \ge 19$: 
    \begin{enumerate}
      \item[(2.1)] If $n > 2k^2 - 2k$, then by Lemma~\ref{lemma:z_optimal_equivalence}, $z^* = \lfloor \zd \rfloor$ or $z^* = \lceil \zd \rceil$.
      \item[(2.2)] If $n \le 2k^2 - 2k$, then by Lemma~\ref{lemma:z2Les0}, $z^* = 0$.
    \end{enumerate}
    \item[(3)] $n \le 18$. 
      The statement was verified by direct enumeration (Lemma~\ref{lemma:enum}).
\end{enumerate}
This covers all possibilities and completes the proof.
\end{proof}

\section{Consequences of Theorem~\ref{thm:min-edge}}
\label{sec:conseq}

Theorem~\ref{thm:min-edge} covers the entire range of $n$
and hence solves Problem~\ref{pro:open}.
We have already seen that the theorem
is a generalization of
Zhang, Lu, and Yu's Theorem~\ref{thm:ZhangLuYu} in \cite{ZhangLuYu2023};
if $n<k^2\le 2k^2-2k$, then
$\dfrac{D(0)}{2}$ equals to the number $\sat(n, (p+1)K_2)$. 

Let us observe how Theorem~\ref{thm:min-edge}
generalizes K\'aszonyi and Tuza's Theorem~\ref{thm:KT} in \cite{KT1986}. 
In our notation we have $k = n - 2p$, so $p = (n-k)/2$.  
The condition in our main theorem
\[
n \le 2k^2 - 2k
\]
is equivalent to
\[
k \ge \frac{1 + \sqrt{2n+1}}{2}.
\]
K\'aszonyi and Tuza considered the case $n \ge 3p$, that is
\[
n \ge 3 \cdot \frac{n-k}{2}
\quad\Longleftrightarrow\quad
k \ge \frac{n}{3}.
\]
When
\[
\frac{n}{3} > \frac{1 + \sqrt{2n+1}}{2},
\]
which holds for all $n \ge 8$, the range $k \ge n/3$ automatically satisfies
\[
k > \frac{1 + \sqrt{2n+1}}{2},
\]
and hence falls into the case $n \le 2k^2 - 2k$ of our theorem.  
For $n \le 7$, our exhaustive enumeration shows that there is no universal vertex in the extremal graphs in this range either.

Now compute $D(0)/2$ in this regime.  
When $z=0$ we have $b = S \bmod k = \bigl((n-k)/2\bigr) \bmod k$.  
Under the condition $3k \ge n$ we have $(n-k)/2 \le k$, so
\[
b \in \left\{ \frac{n-k}{2},\, 0 \right\}.
\]
Substituting these two possibilities into the expression for $D(0)$ shows that in both cases
\[
\frac{D(0)}{2} = 3p.
\]

In summary, the results of K\'aszonyi--Tuza and Zhang--Lu--Yu can be viewed as giving a range of $k$ (in terms of $n$) in which no universal vertices appear:
\[
\text{K\'aszonyi--Tuza: } k \ge \frac{n}{3}, 
\qquad
\text{Zhang--Lu--Yu: } k \ge \sqrt{n}.
\]
When $k$ lies in these ranges, all extremal graphs are disconnected and have no universal vertex.  
For large $n$, our theorem improves these thresholds and still forces the absence of universal vertices.  
For smaller values of $n$, our bounds on $k$ are weaker than those of K\'aszonyi--Tuza and Zhang--Lu--Yu; however,
when $n\le2k^2-2k$ (i.e., $k \ge \frac{1 + \sqrt{2n+1}}{2}$)
in our theorem, 
there are instances where $\lfloor z^\dagger \rfloor = 0$ and $\lfloor z^\dagger \rfloor$ gives the minimum of $D(z)$, which again guarantees that no universal vertices appear in the extremal graphs.

\paragraph{Maximizing $|E(G)|$.}
Finally, we obtain the classical upper bound
on $\sat(n,(p+1)K_2)$ that was
derived by Erd\H{o}s and Gallai~\cite{ErdosGallai1959}
based on our framework. 


For nonnegative real numbers \( a_1, a_2, \dots, a_n \), it follows:
\[
\sum_{i=1}^{n} a_i^2 \le \left( \sum_{i=1}^{n} a_i \right)^2,
\]
with equality if and only if all but at most one of the \( a_i \) are zero.

We obtain the following. 
\[
\begin{aligned}
  \sum_{v \in V(G) }d(v) &\le -z^2 + (2n-3)z + n - k + 4(\sum_{i=1}^{k+z} p_i)^2 \\
  &= -z^2 + (2n-3)z + n - k + 4\left(\frac{n-k}{2} - z\right)^2 \\
  &= 3z^2 + (4k - 2n - 3)z + \bigl(n^2 - 2kn + k^2 + n - k\bigr).\\
\end{aligned}
\]

By analyzing this quadratic function in $z$, 
we naturally recover the classical upper bound in~\cite{ErdosGallai1959}.

\begin{theorem}[Upper Bound on Edge Count~\cite{ErdosGallai1959}]
 
Let $G$ be a $(p+1)K_2$-saturated graph with $n$ vertices and deficiency $k$, where $n - k = 2p$. Then:
\[
|E(G)| \le 
\begin{cases}
\displaystyle \frac{(n - k)(n - k + 1)}{2}, & \text{if } n > 5k - 6;\\[8pt]
\displaystyle \frac{(n - k)(3n+k-2)}{8} , & \text{if } n \le 5k - 6.
\end{cases}
\]

\end{theorem}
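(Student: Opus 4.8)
The plan is to treat $2|E(G)|=\sum_{v}d(v)$ as the quadratic function
\[
Q(z) = 3z^2 + (4k-2n-3)z + \bigl(n^2 - 2kn + k^2 + n - k\bigr)
\]
in the real variable $z$, which is the upper bound already derived in the excerpt using $\sum_i p_i^2 \le (\sum_i p_i)^2$, and then maximize $Q$ over the admissible integer range $z \in [0,\tfrac{n-k}{2}]$. Since $Q$ is an upward-opening parabola (leading coefficient $3>0$), its maximum over any closed interval is attained at one of the two endpoints, so the only work is to (i) evaluate $Q$ at $z=0$ and at $z=\tfrac{n-k}{2}$, (ii) compare the two values to see which endpoint wins, and (iii) divide by $2$ and check that each endpoint value is actually achieved by a genuine $(p+1)K_2$-saturated graph so the bound is tight.

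First I would compute the two endpoint values. At $z=0$ we get $Q(0) = n^2 - 2kn + k^2 + n - k = (n-k)^2 + (n-k) = (n-k)(n-k+1)$, so $Q(0)/2 = \tfrac{(n-k)(n-k+1)}{2}$, which is $\binom{2p+1}{2}$ — the edge count of a single $K_{2p+1}$ together with $k-1$ isolated vertices (equivalently $K_{n-k+1}\cup(k-1)K_1$), the disconnected configuration where all the ``mass'' $\sum p_i$ sits in one component. At $z=\tfrac{n-k}{2}$ we get the configuration with the maximum possible number of universal vertices and all $p_i=0$ (i.e.\ $K_{(n+k)/2}$ joined appropriately, the Erd\H{o}s--Gallai near-clique); a direct substitution gives $Q(\tfrac{n-k}{2})/2 = \tfrac{(n-k)(3n+k-2)}{8}$. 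Next I would solve the comparison $Q(0) \gtrless Q(\tfrac{n-k}{2})$: setting the two expressions equal and simplifying reduces to a linear inequality in $n$, and one checks that $\tfrac{(n-k)(n-k+1)}{2} \ge \tfrac{(n-k)(3n+k-2)}{8}$ precisely when $4(n-k+1) \ge 3n+k-2$, i.e.\ $n \ge 5k-6$. This gives exactly the case split in the statement: the disconnected near-clique wins for $n>5k-6$ and the universal-vertex configuration wins for $n\le 5k-6$ (with equality of the two bounds at $n=5k-6$, consistent with either formula).

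The one subtlety — and the step I expect to require the most care — is \emph{realizability} of the extremal configurations, i.e.\ confirming that the real-valued endpoint optimum corresponds to an actual integer feasible point $(z,p_1,\dots,p_{k+z})$ of a valid $(p+1)K_2$-saturated graph, so that the upper bound is attained and no slack is lost. For $z=0$ one needs $k$ odd components of odd order with $\sum p_i = \tfrac{n-k}{2}$ and the equality condition in $\sum p_i^2 \le (\sum p_i)^2$ forces all but one $p_i$ to be $0$, giving $K_{n-k+1}\cup (k-1)K_1$; for $z=\tfrac{n-k}{2}$ one needs all $p_i=0$ (so $k+z$ singleton cliques) plus $z$ universal vertices, which is a valid saturated graph exactly when the parity and range constraints on $z=\tfrac{n-k}{2}$ hold — this is where $n-k$ even (automatic, since $n-k=2p$) and the structural description recalled in Section~2 are invoked. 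I would also note that since we only used the inequality $\sum p_i^2 \le (\sum p_i)^2$, the bound holds for \emph{every} $(p+1)K_2$-saturated graph regardless of which $z$ it has, because $D(z) \le Q(z) \le \max\{Q(0), Q(\tfrac{n-k}{2})\}$ for every admissible $z$; the realizability check is only needed to show the bound cannot be improved. Assembling these pieces — the parabola-endpoint argument, the explicit endpoint evaluations, the linear comparison yielding the threshold $n=5k-6$, and the two extremal graphs — completes the proof and recovers the Erd\H{o}s--Gallai bound within our framework.
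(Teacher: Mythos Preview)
Your proposal is correct and follows exactly the approach the paper indicates: the paper derives the same quadratic $Q(z)=3z^2+(4k-2n-3)z+(n-k)^2+(n-k)$ and simply says ``by analyzing this quadratic function in $z$, we naturally recover the classical upper bound,'' while you spell out the endpoint-of-parabola argument, the evaluations $Q(0)/2=\tfrac{(n-k)(n-k+1)}{2}$ and $Q(\tfrac{n-k}{2})/2=\tfrac{(n-k)(3n+k-2)}{8}$, and the comparison $4(n-k+1)\ge 3n+k-2\Leftrightarrow n\ge 5k-6$. Your realizability discussion is a reasonable addition but is not required for the theorem as stated, which asserts only the upper bound.
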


\section{Conclusion}\label{sec:conclusion}
In this paper we resolve Problem~\ref{pro:open} by determining the saturation number
\(\sat(n,(p+1)K_2)\) for all \(n>2p\), not only for the range \(2p<n<3p\).
Our main result, Theorem~\ref{thm:min-edge}, gives an explicit formula via the parameter \(z\),
and it shows that the optimum is attained at \(z\in\{\,\lfloor \zd\rfloor,\lceil \zd\rceil\,\}\) whenever \(n>2k^2-2k\),
while \(z=0\) when \(n\le 2k^2-2k\) (with \(k=n-2p\)).
For the finitely many small cases \(n\le 18\) we complete the analysis by an exhaustive enumeration.

Beyond the matching case \(K_2\), we expect that our result will be useful for the broader problem
of determining \(\sat(n,(p+1)K_r)\) for \(r\ge 3\).

\bibliographystyle{plain} 
\bibliography{TheMinimumNumberofEdgesinmSaturatedGraphs}  

\end{document}